\newtheorem{theorem}{Theorem}
\newtheorem{corollary}{Corollary}
\begin{document}

\begin{center}\Large\bf
Complete and vertical lifts of Poisson vector fields and infinitesimal deformations  of Poisson tensor
\end{center}
\begin{center}\large
Alina Dobrogowska, Grzegorz Jakimowicz, Karolina Wojciechowicz
\end{center}

\begin{center}
Institute of Mathematics, University of Białystok, Ciołkowskiego 1M, 15-245 Białystok, Poland
\end{center}
\begin{center}
E-mail: alina.dobrogowska@uwb.edu.pl, g.jakimowicz@uwb.edu.pl, kzukowska@math.uwb.edu.pl
\end{center}







\begin{abstract}
In this paper we prove that both complete and vertical lifts of
a Poisson vector field from a Poisson manifold $(M, \pi)$ to its tangent bundle 
$(TM, \pi_{TM})$ are also Poisson. We use this fact to describe the infinitesimal deformations of Poisson tensor $\pi_{TM}$. We study some of their properties and present a extensive set of examples in a low dimensional case.
\end{abstract}

{\bf Keywords:}
Lie algebroid, linear Poisson structure, tangent and vertical  lifts of vector fields,  bi-Hamiltonian structure, Lie algebra,  tangent lift of Poisson structure


\section{Introduction}

The aim of this paper is to present the extension and generalization of results obtained in the articles \cite{AJ, AJK}.
We want to describe explicitly how to build some infinitesimal deformations of Poisson tensors generated by the algebroid structure of differential forms  using complete and vertical lifts of Poisson vector fields.

The paper is organized as follow. In the Section 2 
we recall such concepts, definitions and well known results from the Poisson geometry as the Poisson vector field, the Schouten--Nijenhuis bracket of vector fields, the bi-Hamiltonian structure, the Poisson cohomology, the infinitesimal deformations of Poisson tensors, see \cite{ a40, 5, CrFe, Zu,  Lich,  14, 10, Tsi-2, Wein}.
We also introduce some notions and results of the theory of Lie algebroids including related Poisson structures on the dual bundle to the Lie algebroid.
As reference to this material we recommend  \cite{5, 13, Ka, 2, 4, We}.

Section 3 contains main results of the paper. Using the concept of lifting multivectors from $M$ to $TM$, see \cite{GraUrb, Mba, MitVai, Ya}, we lift Poisson vector fields from $M$ to 
Poisson vector fields on $TM$ considered as a dual bundle to the algebroid $T^*M$.
The complete and vertical lifts of Poisson vector fields will be the building blocks for generating new Poisson structures on $TM$.
These structures are compatible with the original algebroid structure and can be considered as infinitesimal deformations. Some of these structures belong to deformation of cohomology of Lie algebroid (the fiber-wise linear Poisson cohomology), see also \cite{Cra-Moe}.
The next part of the paper is devoted to the examples of main theorems.

\section{Preliminaries and notations}

We give here a short review of some basic notions and facts in  Poisson geometry
which will be used in the main part of the paper.

Let $(M,\pi)$ be $N$-dimensional Poisson manifold, where $\pi\in \Gamma^{\infty} \left(\bigwedge^2 TM\right)$ is a Poisson tensor.
The Poisson bracket on $M$ is given by $\{f,g\}=\pi(df,dg)$ and it is 
a skew-symmetric bilinear mapping satisfying the Jacobi identity
and the Leibniz rule.
In a system of local coordinates ${\bf x}=(x^1,\dots, x^N)$ on $M$  it can be written in the form
\begin{equation}
\{f,g\}({\bf x})=\sum_{ i,j=1}^{N}\pi^{ij}({\bf x})\dfrac{\partial f}{\partial x^i} \dfrac{\partial g}{\partial x^j},
\end{equation}
where $\pi^{ij}({\bf x})=-\pi^{ji}({\bf x})=\{x^i,x^j\}$.

We denote by $\mathcal{X}(M)$ the space of smooth vector fields on a manifold $M$.
The Leibniz identity means that the map $f\longmapsto  \{f,h\}$ is a derivation for all $h
\in C^\infty(M)$. 
Thus, there is a unique vector field on $M$ called the Hamiltonian vector field of $h$, such that $C^{\infty}(M)\ni h\longmapsto X_h=\{\cdot, h\}\in \mathcal{X}(M)$. It means that a Poisson bracket defines an antihomomorphism $[X_{h_1},X_{h_2}]=-X_{\{h_1,h_2\}}$. 
The functions $c_i\in C^{\infty}(M)$ for which the Hamiltonian vector field $X_h$ vanishes identically are called Casimir functions.

Generally a vector field $X\in \mathcal{X}(M)$ on a Poisson manifold $M$ such that
\begin{equation}
\mathcal{L}_{X}\pi=0
\end{equation}
is called a Poisson vector field. It can be viewed as an infinitesimal automorphism of the Poisson structure.
The Lie derivative of the bi-vector $\pi$ along $X$ is given by the formula 
\begin{equation}
\left(\mathcal{L}_{X}\pi\right)(df,dg) = \mathcal{L}_{X}\left(\pi(df,dg)\right) -\pi (\mathcal{L}_{X}(df), dg)-\pi (df, \mathcal{L}_{X}(dg)),\end{equation} 
or equivalently in the terms of the Poisson bracket
\begin{equation}
\left(\mathcal{L}_{X}\pi\right)(df,dg) =  X\left(\{f,g\}\right)-\{X\left(f\right),g\}-\{f, X\left(g\right)\},
\end{equation}
where $f,g\in C^{\infty}(M)$.
Then the condition on the vector field $X$ to be Poisson is
\begin{equation}
\label{con}
X\left(\{f,g\}\right)-\{X\left(f\right),g\}-\{f, X\left(g\right)\}=0.
\end{equation}
In local coordinates when $X=\sum_{i=1}^{N}v^i\dfrac{\partial}{\partial x^i}$ we  rewrite it in the form
\begin{equation}
\sum_{s=1}^{N}\left(\dfrac{\partial \pi^{ij}}{\partial x^s} v^s-\pi^{sj}\dfrac{\partial v^i}{\partial x^s}-\pi^{is}\dfrac{\partial v^j}{\partial x^s} \right)=0.
\end{equation}
It can be seen that Hamiltonian vector fields are always Poisson as the equation (\ref{con}) reduces to the Jacobi identity.
Thus it is a property of the Poisson tensor that Lie derivative of $\pi$ with respect to $X_h$ vanishes $\mathcal{L}_{X_h}\pi=0$.
The Hamiltonian vector fields are a Lie algebra ideal in the Lie algebra of Poisson vector fields.

Next, we denote by $\mathcal{X}^k(M)=\Gamma^{\infty}\left(\Lambda^k TM\right)$ the space  of  $k$-vector fields on $M$. In addition, we  recall the definition  of the Schouten--Nijenhuis bracket, see \cite{8, 7}, which is a bilinear map 
$\mathcal{X}^k(M) \times \mathcal{X}^l(M)\ni(X,Y)\longmapsto  [X, Y]\in \mathcal{X}^{k+l-1}(M)$ 
determined by the properties
\begin{align}
\label{p6}
& [X, Y]=-(-1)^{(k-1)(l-1)}[Y, X],\quad X\in \mathcal{X}^k(M), Y\in \mathcal{X}^l(M),\\
& [X, Y\wedge Z]=[X, Y]\wedge Z +(-1)^{(k-1)l} Y\wedge [X, Z],\quad X\in \mathcal{X}^k(M),\nonumber\\
&\quad\quad\quad\quad\quad\quad\quad Y\in \mathcal{X}^l(M), Z\in \mathcal{X}^p(M), \nonumber\\
& [X,Y]= \mathcal{L}_{X}Y, ,\quad X\in \mathcal{X}(M), Y\in \mathcal{X}^l(M),\nonumber
\end{align}
where $[X,Y]$  is the commutator bracket of vector fields, $[X,f]=X(f)$ and $[f,g]=0$ for $X,Y\in \mathcal{X}(M)$,  $f,g\in \mathcal{X}^{0}(M)= C^{\infty}(M)$.
The Schouten bracket satisfies the graded Jacobi identity
\begin{equation}
[X,[Y,Z]]=[[X,Y],Z]+(-1)^{(k-1)(l-1)}[Y,[X,Z]]
\end{equation}
for $X\in \mathcal{X}^k(M)$, $Y\in \mathcal{X}^l(M)$, $Z\in \mathcal{X}^p(M)$.
Thus $\left(\bigoplus_{k=0}^{N} \mathcal{X}^k(M), [\cdot, \cdot]\right)$ is a graded Lie algebra.

We say that two Poisson tensors $\pi_1$ and $\pi_2$ are compatible if the linear combination $\pi_1+\lambda\pi_2$, $\lambda\in\mathbb{R}$, is again a Poisson structure. It is equivalent to the vanishing of Schouten--Nijenhuis bracket $[\pi_1, \pi_2]=0$. In particular $[\pi, \pi]=0$ is the Jacobi identity.
The manifold $M$ equipped with two compatible Poisson structures $\pi_1, \pi_2$ is called a bi-Hamiltonian manifold, see \cite{a39,a35,a19}.

The concept of a Poisson cohomology was first introduced by Lichnerowicz \cite{Lich}.
For a Poisson tensor $\pi$ the $\mathbb{R}$-linear map $\delta_{\pi}:\mathcal{X}^k(M)\longrightarrow \mathcal{X}^{k+1}(M)$
defined by     the Schouten--Nijenhuis bracket 
\begin{equation}
\delta_{\pi}(X)=[\pi, X]
\end{equation}
is a coboundary, i.e. $\delta_{\pi}\circ \delta_{\pi}=0$.
This is the consequence of the property $[\pi,[\pi,\cdot]]=0$. The operator $\delta_{\pi}$ is called the Lichnerowicz--Poisson
differential.
This mapping generates the Lichnerowicz complex of multivector fields
\begin{equation}
\cdots \stackrel{\delta_{\pi}}{\longrightarrow} \mathcal{X}^k(M) \stackrel{\delta_{\pi}}{\longrightarrow}\mathcal{X}^{k+1}(M) 
\stackrel{\delta_{\pi}}{\longrightarrow} \mathcal{X}^{k+2}(M) 
\stackrel{\delta_{\pi}}{\longrightarrow}\cdots .
\end{equation}
The  cohomology of this complex $(\mathcal{X}^{*}(M),\delta_{\pi})$  is called Poisson cohomology. For symplectic manifolds,  the Poisson cohomology is  isomorphic to the de Rham cohomology. In the special case when we have linear Poisson structures the Poisson cohomology is related to Lie algebra cohomology (Chevalley--Eilenberg cohomology).
The Poisson cohomology groups are denoted by $H^{*}_{\pi}(M)=\bigoplus_{k=0}^{\infty} H^{k}_{\pi}(M)$, where
\begin{equation}
H^{k}_{\pi}(M)=\dfrac{ker \left(\delta_{\pi}: \mathcal{X}^k(M)\longrightarrow \mathcal{X}^{k+1}(M)\right) }{Im \left(\delta_{\pi}: 
\mathcal{X}^{k-1}(M)\longrightarrow \mathcal{X}^k(M)\right)}.
\end{equation}
In particular, the zero Poisson cohomology group $H^{0}_{\pi}(M)$ coincides with the ring of Casimir functions of $\pi$, i.e. $[\pi, c]=\{\cdot
,c\}=0$.
The first Poisson cohomology group $H^{1}_{\pi}(M)$ is the quotient of the Lie algebra of infinitesimal symmetries (space of Poisson vector fields), i.e. $[\pi,X]=-\mathcal{L}_{X}\pi=0$, over the space of Hamiltonian vector fields, i.e. $[\pi,h]=X_h$. It is called the space of outer automorphisms of Poisson manifold.
Next the  Poisson cohomology group $H^{2}_{\pi}(M)$ is the quotient of the space of bi-vector fields $X$ which  satisfy the condition $[\pi,X]=0$  over the space of  bi-vector fields which can be presented in the form  $X=[\pi, Y]$, where $Y\in \mathcal{X}(M) $.
As reference to this material, we recommend \cite{Am, Cra-Moe, Zu, Ho}.

The Poisson cohomology is a useful tool in Poisson geometry, it plays an important
role  in  deformation theory and gives some information about the geometry of the manifold.
If we consider a formal one-parameter deformation of a Poisson
structure $\pi$ given by
\begin{equation}
\pi_{\lambda}=\pi+\lambda X,
\end{equation}
where $X\in \mathcal{X}^2(M)$, $\lambda\in\mathbb{R}$, then the condition for $\pi_{\lambda}$ to be a Poisson tensor gives
\begin{align}
[\pi_{\lambda},\pi_{\lambda}] & =[\pi+\lambda X,\pi+\lambda X]=2\lambda[\pi,X]+\lambda^2[X,X]=0.
\end{align}
If $X\in H^{2}_{\pi}(M)$ then 
\begin{equation}
[\pi+\lambda X,\pi+\lambda X]=\lambda^2[X,X]
\end{equation}
 satisfies the Jacobi identity up to terms of order $\lambda^2$.
So $\pi+\lambda X^2$ is called an infinitesimal deformation  of Poisson tensor $\pi$. Thus $H^{2}_{\pi}(M)$ is interpreted as nontrivial infinitesimal deformations.

Next we recall that a Lie algebroid $(A,[\cdot, \cdot]_{A},a)$ over manifold $M$ is a vector bundle $q_M:A\longrightarrow M$ together with
a vector bundle map $a:A\longrightarrow   TM $, called the anchor, and a Lie bracket $[\cdot, \cdot]_{A}$ on the space of sections $\Gamma(A)$. A Lie bracket $[\cdot, \cdot]_{A}:\Gamma(A)\times \Gamma(A)\longrightarrow \Gamma(A)$ satisfies the following Leibniz rule
\begin{equation}
[X_{A}, fY_{A}]=f[X_{A}, Y_{A}]+a(X_A)(f)Y_A
\end{equation}
for any sections $X_A,Y_A\in \Gamma(A)$ and function $f\in C^{\infty}(M)$.
The anchor meets the condition
\begin{equation}
a\left([X_{A}, Y_{A}]\right)=[a\left(X_{A}\right),a\left(Y_{A}\right) ].
\end{equation}
This notion was introduced by Pradines \cite{Par}, see also \cite{2,4}.
It is well-known that a Poisson structure on $M$ induces the algebroid structure on the cotangent bundle $A=T^{*}(M)$ of $M$ by the property
\begin{align}
[df,dg]=d\{f,g\},\\
a(df)(g)=\{f,g\}
\end{align}
for $f,g\in C^{\infty}(M)$, see \cite{Ma-Mor}.

There is a natural correspondence between Lie algebroids $A$ and Poisson structures which are linear on the fibers on the total space of the dual bundle $A^*$ of $A$. For instance, if $A=T^{*}(M)$ 
\begin{equation}
 \label{d1111xxx}
\xymatrix{
& & A=T^*M \ar@<.0ex>[dd]^*+<1ex>\txt{\tiny{{$q^*_{_{M}}$}}} \ar@<-.0ex>[rr]^*+<1ex>\txt{\tiny{{$a$}}}  & & TM \ar@<.0ex>[dd]^*+<1ex>\txt{\tiny{{$q_{_{M}}$}}}&&\\
    & &&&   & &\\
  && M  \ar@<-.0ex>[rr]^*+<1ex>\txt{\tiny{{$id$}}}  & & M &&\\
 }
 \\
 \end{equation}
then the Poisson bracket on $A^{*}=TM$ is given by the properties
\begin{align}
& \label{1}\{f\circ q_M, g\circ q_M\}_{TM}=0,\\
& \label{2}\{f\circ q_M, l_{dg}\}_{TM}=-a(dg)(f)\circ q_M,\\
& \{ l_{df}, l_{dg}\}_{TM}=l_{[df,dg]}
\end{align}
for functions $f,g\in C^{\infty}(M)$ on the base and fiber-wise linear functions $l_{df},l_{dg}\in C^{\infty}(TM)$, where $l_{df}$ is given by pairing
\begin{equation}
l_{df}(X)=\big< X, df(q_M(X))\big>, \quad \forall X\in TM.
\end{equation}
We denote by $y^i=l_{dx^i}$.
This type of Poisson tensors is called a fiber-wise linear Poisson structure, see \cite{Zu}.
The Poisson tensor can be rewritten in the form
\begin{equation}
\pi_{TM}({\bf x}, {\bf y})=\sum_{i,j=1}^{N} \pi^{ij}({\bf x})\frac{\partial}{\partial x^i}\wedge \frac{\partial}{\partial y^j}+
\dfrac{1}{2}\sum_{i,j,s=1}^{N}\dfrac{\partial\pi^{ij} }{\partial x^s}({\bf x})y^s
\frac{\partial}{\partial y^i}\wedge \frac{\partial}{\partial y^j}
\end{equation}
or presented graphically
\begin{equation}
\label{tensor-pi}
\pi_{TM}({\bf x}, {\bf y})=\left(
\begin{array}{c|c}
0& \pi({\bf x})\\
\hline
\pi({\bf x}) & \sum_{s=1}^{N}\dfrac{\partial\pi }{\partial x^s}({\bf x})y^s
\end{array}
\right),
\end{equation}
where $({\bf x}, {\bf y})=(x^1,\dots, x^N, y^1,\dots, y^N)$ is a system of local coordinates on $TM$.
Some of the properties of this Poisson structure are well known, see \cite{AJ, GraUrb}.
If $c_1,\dots, c_r$ are  Casimir functions for the Poisson structure $\pi$,  then the functions 
\begin{equation}
\label{cas}
c_i\circ q_M \quad \textrm{and}\quad l_{dc_i}=\sum_{s=1}^{N}\dfrac{\partial c_i}{\partial x^s}({\bf x})y^s, \quad i=1,\dots r,
\end{equation}
are Casimir functions for the Poisson tensor $\pi_{TM}$. 
Subsequently if the functions $\{H_i\}_{i=1}^k$ are in involution with respect to the Poisson tensor $\pi$,
then the functions 
\begin{equation}
\label{cas-1-n}
H_i\circ q_M \quad \textrm{and}\quad  l_{dH_i}=\sum_{s=1}^{N}\dfrac{\partial H_i}{\partial x^s}({\bf x})y^s,\quad i=1,\dots k,
\end{equation}
 are in involution with respect to the Poisson tensor $\pi_{TM}$ given by (\ref{tensor-pi}).

Cranic and Moerdijk \cite{Cra-Moe} introduced the deformation cohomology for Lie algebroids.
In the next section we show how to lift the Poisson cohomology on $M$ to a fiber-wise linear Poisson cohomology on $A^{*}=TM$.

\section{Lift of Poisson vector fields and some infinitesimal deformations of Poisson tensors}

In the beginning we recall the known facts about lift of multivector fields from manifold $M$ to $TM$, see \cite{GraUrb, Mba, Ya}.
Given a $k$-vector field in local coordinates 
\begin{equation}
X=\sum_{i_1,\dots , i_k=1}^{N} v^{i_1\dots i_k}({\bf x})\dfrac{\partial}{\partial x^{i_1}}\wedge \dots \wedge \dfrac{\partial}{\partial x^{i_k}}
\in \mathcal{X}^{k}(M)
\end{equation}
we have the following complete lift to $\mathcal{X}^{k}(TM)$
$$
X_{C}=\sum_{i_1,\dots, i_k=1}^{N} \left( v^{i_1\dots i_k}({\bf x})\dfrac{\partial}{\partial y^{i_1}}\wedge \dots \wedge \dfrac{\partial}{\partial y^{i_{l-1}}}\wedge \dfrac{\partial}{\partial x^{i_l}} \wedge \dfrac{\partial}{\partial y^{i_{l+1}}} \wedge \dots\dfrac{\partial}{\partial y^{i_k}}+\right.
$$
\begin{equation}
\left. +\sum_{s=1}^{N} \dfrac{\partial v^{i_1\dots i_k}}{\partial x^{s}}({\bf x})y^s
\dfrac{\partial}{\partial y^{i_1}}\wedge \dots \wedge \dfrac{\partial}{\partial y^{i_k}}\right).
\end{equation}
The vertical lift of $X$ from $M$ to $TM$ we denote by $X_{V}$ and it is defined by
\begin{equation}
X_{V}=\sum_{i_1,\dots , i_k=1}^{N} v^{i_1\dots i_k}({\bf x})\dfrac{\partial}{\partial y^{i_1}}\wedge \dots \wedge \dfrac{\partial}{\partial y^{i_k}}\in \mathcal{X}^{k}(TM).
\end{equation}
For $X\in \mathcal{X}^{k}(M)$ and $Y\in \mathcal{X}^{l}(M)$ we obtain the following commutator relations for $X_{C},X_{V}\in \mathcal{X}^{k}(TM)$, $Y_{C},Y_{V}\in \mathcal{X}^{l}(TM)$
\begin{align}
& [X_{C},Y_{C}]=[X,Y]_{C},\\
& [X_{C},Y_{V}]=[X,Y]_{V},\\
\label{3-c}& [X_{V},Y_{V}]=0.
\end{align}

Using the above formulas we lift  Poisson vector fields on $M$ to  Poisson vector fields on $TM.$
\begin{theorem}
If $X=\sum_{i=1}^N v^i({\bf x})\frac{\partial}{\partial x^i}$ is a Poisson vector field on a Poisson manifold $(M,\pi)$ then
\begin{equation}
\label{psvf1}
X_{C}=\sum_{i=1}^N v^i({\bf x})\frac{\partial}{\partial x^i}+\sum_{i,s=1}^N \frac{\partial v^i}{\partial x^s}({\bf x})y^s\frac{\partial}{\partial y^i},
\end{equation}
\begin{equation}
\label{psvf2}
X_{V}=\sum_{i=1}^N v^i({\bf x})\frac{\partial}{\partial y^i}
\end{equation}
are Poisson vector fields on the Poisson manifold $(TM,\pi_{TM}).$
\end{theorem}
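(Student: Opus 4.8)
The plan is to recognize that the tangent Poisson tensor $\pi_{TM}$ is precisely the complete lift $\pi_C$ of the bivector $\pi$, and then to deduce both assertions formally from the commutator relations for lifts together with the Schouten--Nijenhuis characterization of Poisson vector fields. The key preliminary step, which I would carry out first, is the identification $\pi_{TM}=\pi_C$. To verify it I would specialize the general complete-lift formula to $k=2$, writing $\pi=\sum_{i,j}v^{ij}\partial_{x^i}\wedge\partial_{x^j}$ with $v^{ij}=\tfrac12\pi^{ij}$ antisymmetric. The horizontal part produces $\sum_{i,j}v^{ij}\bigl(\partial_{x^i}\wedge\partial_{y^j}+\partial_{y^i}\wedge\partial_{x^j}\bigr)$, which after relabelling $i\leftrightarrow j$ and using $\pi^{ij}=-\pi^{ji}$ collapses to $\sum_{i,j}\pi^{ij}\partial_{x^i}\wedge\partial_{y^j}$, while the derivative part gives $\tfrac12\sum_{i,j,s}\frac{\partial\pi^{ij}}{\partial x^s}y^s\,\partial_{y^i}\wedge\partial_{y^j}$. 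These are exactly the two blocks displayed in \eqref{tensor-pi}, so indeed $\pi_{TM}=\pi_C$.

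Next I would rephrase the hypothesis and the conclusion in Schouten terms. Since $[Z,W]=\mathcal{L}_Z W$ for a vector field $Z$ and a multivector $W$, the assumption that $X$ is Poisson reads $[X,\pi]=\mathcal{L}_X\pi=0$, and what must be shown is $[X_C,\pi_{TM}]=0$ and $[X_V,\pi_{TM}]=0$. Using $\pi_{TM}=\pi_C$ and the lift identity $[X_C,Y_C]=[X,Y]_C$ with $Y=\pi$, the complete-lift case is immediate:
\[
\mathcal{L}_{X_C}\pi_{TM}=[X_C,\pi_C]=[X,\pi]_C=(\mathcal{L}_X\pi)_C=0 .
\]

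For the vertical lift I would combine graded antisymmetry of the Schouten bracket with the identity $[X_C,Y_V]=[X,Y]_V$. Since $X_V$ has degree $1$ and $\pi_C$ degree $2$, the sign factor $(-1)^{(1-1)(2-1)}$ is trivial, and
\[
\mathcal{L}_{X_V}\pi_{TM}=[X_V,\pi_C]=-[\pi_C,X_V]=-[\pi,X]_V=[X,\pi]_V=0 ,
\]
where the last two equalities use $[\pi,X]=-[X,\pi]$ and $[X,\pi]=\mathcal{L}_X\pi=0$.

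The only genuinely substantive point---the hard part---is the identification $\pi_{TM}=\pi_C$; once it is in hand, everything is a formal consequence of the three commutator relations for lifts and of $[X,\pi]=\mathcal{L}_X\pi$. Alternatively, one could bypass this identification and verify both claims directly from the local condition \eqref{con} by substituting the components of $X_C$ and $X_V$ together with those of \eqref{tensor-pi} into the coordinate form of $\mathcal{L}_\bullet\pi_{TM}$; I expect this route to be far more computational, reproducing index by index exactly the cancellations already encoded in the bracket identities, and so I would favour the lift-theoretic argument above.
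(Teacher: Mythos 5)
Your proposal is correct, but it takes a genuinely different route from the paper. The paper proves the theorem by a direct coordinate computation: it checks the Poisson--vector--field condition $Z(\{f,g\}_{TM})-\{Z(f),g\}_{TM}-\{f,Z(g)\}_{TM}=0$ for $Z=X_C$ and $Z=X_V$ with $f,g$ running over the coordinate functions $x^i,y^j$, and observes that each nontrivial case reduces to $(\mathcal{L}_X\pi)(dx^i,dx^j)=0$ or to a fiber derivative of that expression. You instead establish the identification $\pi_{TM}=\pi_C$ (your $k=2$ specialization of the complete-lift formula, with the antisymmetrization bookkeeping $v^{ij}=\tfrac12\pi^{ij}$, is carried out correctly and does reproduce both blocks of \eqref{tensor-pi}) and then derive both claims from the quoted commutator relations $[X_C,Y_C]=[X,Y]_C$ and $[X_C,Y_V]=[X,Y]_V$ together with graded antisymmetry; the sign analysis in the vertical case is right. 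Your argument is shorter and more structural: it makes transparent why the statement holds, and it generalizes immediately to lifts of higher multivector fields (which is essentially the mechanism the paper invokes later when asserting $\delta_{\pi_{TM}}((X\wedge Y)_V)=(\delta_\pi(X\wedge Y))_V$). What the paper's computation buys is self-containedness: it uses only the explicit algebroid-derived form of $\pi_{TM}$ and does not lean on the identification $\pi_{TM}=\pi_C$ or on the lift commutator identities, which are themselves only recalled from the literature without proof. If you use your route, you should state the identification $\pi_{TM}=\pi_C$ as an explicit lemma (it is standard, due to Grabowski--Urba\'nski, but the paper never writes it down), since it is the one substantive input on which everything else formally depends.
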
 
\begin{proof}
To check this, it is enough to check that it holds on local system of coordinates $(x^1,\dots, x^N,y^1,\dots,y^N)$.
By direct calculation we obtain
\begin{align}
X_{C}\left(\{x^i,x^j\}_{TM}\right)&-\{X_{C}(x^i),x^j\}_{TM}-\{x^i,X_{C}(x^j)\}_{TM}=\\
&=-\{v^i({\bf x}),x^j\}_{TM}-\{x^i,v^j({\bf x})\}_{TM}=0,\nonumber
\end{align}
\begin{align}
X_{C}\left(\{x^i,y^j\}_{TM}\right)&-\{X_{C}(x^i),y^j\}_{TM}-\{x^i,X_{C}(y^j)\}_{TM}=\\
&=X_{C}\left(\pi^{ij}({\bf x})\right)-\{v^i({\bf x}),y^j\}_{TM}-\{x^i,\sum_{s=1}^N\frac{\partial v^j}{\partial x^s}({\bf x})y^s\}_{TM}=\nonumber\\
&=\sum_{s=1}^N\left(v^s({\bf x})\frac{\partial \pi^{ij}}{\partial x^s}({\bf x})-\pi^{sj}({\bf x})\frac{\partial v^i}{\partial x^s}({\bf x})-\pi^{is}({\bf x})\frac{\partial v^j}{\partial x^s}({\bf x})\right)=\nonumber\\
&=\left(\mathcal{L}_X\pi\right)\left(dx^i,dx^j\right)=0,\nonumber
\end{align}
\begin{align}
& X_{C}\left(\{y^i,y^j\}_{TM}\right)-\{X_{C}(y^i),y^j\}_{TM}-\{y^i,X_{C}(y^j)\}_{TM}=\\
&=X_{C}\left(\sum_{s=1}^N\frac{\partial\pi^{ij}}{\partial x^s}({\bf x})y^s\right)-\{\sum_{s=1}^N\frac{\partial v^i}{\partial x^s}({\bf x})y^s,y^j\}_{TM}-\{y_i,\sum_{s=1}^N\frac{\partial v^j}{\partial x^s}({\bf x})y^s\}_{TM}=\nonumber\\
&=\sum_{m,s=1}^N y^m\frac{\partial}{\partial x^m}\left(v^s({\bf x})\frac{\partial \pi^{ij}}{\partial x^s}({\bf x})-\pi^{sj}({\bf x})\frac{\partial v^i}{\partial x^s}({\bf x})-\pi^{is}({\bf x})\frac{\partial v^j}{\partial x^s}({\bf x})\right)=\nonumber\\
&=\sum_{m,s=1}^N y^m\frac{\partial}{\partial x^m}\left(\left(\mathcal{L}_X\pi\right)\left(dx^i,dx^j\right)\right)=0.\nonumber
\end{align}
Performing the similar calculation for $X_{V}$ 
\begin{align}
X_{V}\left(\{x^i,x^j\}_{TM}\right)&-\{X_{V}(x^i),x^j\}_{TM}-\{x^i,X_{V}(x^j)\}_{TM}=0
\end{align}
\begin{align}
X_{V}\left(\{x^i,y^j\}_{TM}\right)&-\{X_{V}(x^i),y^j\}_{TM}-\{x^i,X_{V}(y^j)\}_{TM}=\\
&=X_{V}\left(\pi^{ij}({\bf x})\right)-\{x^i, v^j({\bf x})\}_{TM}=0 ,\nonumber
\end{align}
\begin{align}
X_{V}\left(\{y^i,y^j\}_{TM}\right)&-\{X_{V}(y^i),y^j\}_{TM}-\{y^i,X_{V}(y^j)\}_{TM}=\\
&=X_{V}\left(\sum_{s=1}^N\frac{\partial\pi^{ij}}{\partial x^s}({\bf x})y^s\right)-\{ v^i({\bf x}),y^j\}_{TM}-\{y_i, v^j({\bf x})\}_{TM}=\nonumber\\
&=\sum_{s=1}^N \left( v^s({\bf x})\frac{\partial \pi^{ij}}{\partial x^s}({\bf x})-\pi^{sj}({\bf x})\frac{\partial v^i}{\partial x^s}({\bf x})-\pi^{is}({\bf x})\frac{\partial v^j}{\partial x^s}({\bf x})\right)=\nonumber\\
&=\sum_{s=1}^N \left(\mathcal{L}_X\pi\right)\left(dx^i,dx^j\right)=0.\nonumber
\end{align}
We see that these are also  Poisson vector fields on $\left(TM,\pi_{TM}\right)$.
\end{proof}
Above, the first vector field, given by $(\ref{psvf1})$, is a fiber--wise linear vector field and the second, given by $(\ref{psvf2})$, is a fiber--wise constant vertical vector field.

\begin{theorem}
If $X_{C}$, $X_{V}$ are complete and vertical lifts of vector field $X\in \mathcal{X}(M)$, then
the bi-vector
\begin{equation}\label{def}
\pi_{X_{C},X_{V}}=X_{C}\wedge X_{V}
\end{equation}
is a Poisson tensor on $TM$.
\end{theorem}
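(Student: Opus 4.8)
The plan is to use the standard criterion recalled in Section~2: a bi-vector $\Pi \in \mathcal{X}^2(TM)$ is a Poisson tensor precisely when its Schouten--Nijenhuis bracket with itself vanishes, $[\Pi,\Pi]=0$ (this is the Jacobi identity $[\pi,\pi]=0$). So the whole statement reduces to verifying that $[\,X_{C}\wedge X_{V},\,X_{C}\wedge X_{V}\,]=0$. Rather than expanding $\pi_{X_C,X_V}$ in coordinates and grinding through the Jacobi identity for the bracket $\{\cdot,\cdot\}_{TM}$, I would work invariantly with the graded Leibniz rule and the antisymmetry of the Schouten bracket stated in~(\ref{p6}), which keeps the computation short and transparent.

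First I would record the general identity for a decomposable bi-vector built from two vector fields $A,B\in\mathcal{X}(TM)$. Treating $A\wedge B\in\mathcal{X}^2$ and applying antisymmetry together with the Leibniz rule in the form $[A,A\wedge B]=[A,A]\wedge B + A\wedge[A,B]=A\wedge[A,B]$ (using $[A,A]=0$), one gets $[A\wedge B,A]=-A\wedge[A,B]$ and likewise $[A\wedge B,B]=[A,B]\wedge B$. Expanding the bracket once more by Leibniz on the second slot,
\begin{equation}
[A\wedge B,\,A\wedge B]=[A\wedge B,A]\wedge B-A\wedge[A\wedge B,B]=-2\,A\wedge[A,B]\wedge B .
\end{equation}
Thus a decomposable bi-vector $A\wedge B$ is always determined, as far as its self-bracket is concerned, by the single commutator $[A,B]$; in particular $A\wedge B$ is automatically Poisson whenever $[A,B]=0$.

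It then remains only to supply the key input $[X_{C},X_{V}]=0$. This is immediate from the commutator relations for lifts recalled just before the theorem: $[X_{C},X_{V}]=[X,X]_{V}$, and $[X,X]=0$ for any vector field $X$, so $[X_{C},X_{V}]=0_{V}=0$. Substituting $A=X_{C}$, $B=X_{V}$ into the identity above yields $[X_{C}\wedge X_{V},X_{C}\wedge X_{V}]=-2\,X_{C}\wedge[X_{C},X_{V}]\wedge X_{V}=0$, which proves $\pi_{X_C,X_V}$ is Poisson. I would remark that the argument never uses that $X$ is a Poisson vector field, so the conclusion in fact holds for an \emph{arbitrary} $X\in\mathcal{X}(M)$. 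The only genuinely delicate point is the sign bookkeeping in the graded Leibniz expansion; I expect that to be the main (though purely routine) obstacle, and I would organize it exactly as above by first evaluating $[X_C\wedge X_V, X_C]$ and $[X_C\wedge X_V, X_V]$ separately before assembling the final bracket.
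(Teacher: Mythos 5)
Your proof is correct and follows essentially the same route as the paper's: expand $[X_C\wedge X_V,X_C\wedge X_V]$ via the graded Leibniz rule and antisymmetry of the Schouten--Nijenhuis bracket to obtain $-2\,X_C\wedge[X_C,X_V]\wedge X_V$, then conclude from $[X_C,X_V]=[X,X]_V=0$. Your side remark that $X$ need not be a Poisson vector field is consistent with the theorem's hypothesis, which only assumes $X\in\mathcal{X}(M)$.
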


\begin{proof}
It is easy to see, from definition, that (\ref{def}) is antisymmetric. If so, it is enough to check if the Jacobi identity holds. Direct calculation, using properties of the Schouten--Nijenhuis bracket given by (\ref{p6}) yields
\begin{align}
\left[\pi_{X_{C},X_{V}},\pi_{X_{C},X_{V}}\right]&=\left[X_{C}\wedge X_{V},X_{C}\wedge X_{V}\right]=\\
&=\left[X_{C}\wedge X_{V},X_{C}\right]\wedge X_{V}-X_{C}\wedge\left[X_{C}\wedge X_{V}, X_{V}\right]=\nonumber\\
&=-\left[X_{C},X_{C}\wedge X_{V}\right]\wedge X_{V}+X_{C}\wedge \left[X_{V},X_{C}\wedge X_{V}\right]=\nonumber\\
&=-\left[X_{C},X_{C}\right]\wedge X_{V}\wedge X_{V}-X_{C}\wedge \left[X_{C},X_{V}\right]\wedge X_{V}+\nonumber\\
&+X_{C}\wedge \left[X_{V},X_{C}\right]\wedge X_{V}+X_{C}\wedge X_{C}\wedge\left[X_{V}, X_{V}\right]=\nonumber\\
&=-2X_{C}\wedge \left[X_{C},X_{V}\right]\wedge X_{V}=2 \left[X,X\right]_{V}\wedge X_{C}\wedge  X_{V}=0.\nonumber
\end{align}
Then $\left[\pi_{X_{C},X_{V}},\pi_{X_{C},X_{V}}\right]=0.$ It means that $\pi_{X_{C},X_{V}}$ is a Poisson tensor.
\end{proof}

In a local system of coordinates $(x^1,\dots,x^N,y^1,\dots, y^N)$ the matrix of the Poisson tensor has the form
\begin{equation}
\begingroup\makeatletter\def\f@size{8}\check@mathfonts 
\pi_{X_{C},X_{V}}  ({\bf x}, {\bf y})=\left(
\begin{array}{c|c}
 0 & v({\bf x})v^{\top}({\bf x})\\
\hline
-v({\bf x})v^{\top}({\bf x}) & \sum_{s=1}^{N}\left( \dfrac{\partial v }{\partial x^s}({\bf x})v^{\top}({\bf x})-v({\bf x})\left( \dfrac{\partial v }{\partial x^s}({\bf x})\right)^{\top}\right)y^s
\end{array}
\right),
\endgroup
\end{equation}
where $v^{\top}=(v^1,\dots v^N).$

Note also that if $X$ is a Poisson vector field on a Poisson manifold $(M,\pi)$, then the Lie derivative of the Casimir function $c_i$ is again a Casimir function, i.e. 
\begin{equation}
\label{cas-3}
\mathcal{L}_{X}c_i=X(c_i)=c_j.
\end{equation}
In the case when we get zero, we have the following theorem.
\begin{theorem}
\label{th-333}
Let $X$ be a vector field on $M$ and let $f$ be smooth function on $M$. If $X(f)=0$ then for complete and vertical lifts of vector field $X$ we have
\begin{align}
& X_C(f\circ  q_M)=0, &  X_C(l_{df})=0,\\
& X_V(f\circ  q_M)=0, &  X_V(l_{df})=0.
\end{align}
\end{theorem}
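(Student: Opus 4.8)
The plan is to verify all four identities by direct computation in the local coordinates $(x^1,\dots,x^N,y^1,\dots,y^N)$ on $TM$, using the explicit coordinate expressions for $X_C$ and $X_V$ from $(\ref{psvf1})$--$(\ref{psvf2})$ together with the fact that $f\circ q_M$ is the function $f(\mathbf{x})$, which is independent of the fiber variables $\mathbf{y}$, and that $l_{df}=\sum_{s=1}^N\frac{\partial f}{\partial x^s}(\mathbf{x})y^s$ as recorded in $(\ref{cas})$. The hypothesis enters only through the single scalar identity $X(f)=\sum_s v^s\frac{\partial f}{\partial x^s}=0$, so the real content is to show that each of the four expressions reduces to $X(f)$, or to a first derivative of it, before that hypothesis is imposed.

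First I would dispose of the two easy cases. Since $f\circ q_M$ depends on $\mathbf{x}$ only, applying $X_V=\sum_i v^i\frac{\partial}{\partial y^i}$ annihilates it immediately, giving $X_V(f\circ q_M)=0$ with no use of the hypothesis at all. For $X_V(l_{df})$, the operator $\sum_i v^i\frac{\partial}{\partial y^i}$ differentiates the fiber-wise linear function $\sum_s\frac{\partial f}{\partial x^s}y^s$ and returns exactly $\sum_i v^i\frac{\partial f}{\partial x^i}=X(f)$, which vanishes by assumption. The case $X_C(f\circ q_M)$ is almost as short: the vertical part of $X_C$ kills $f(\mathbf{x})$ and the horizontal part reproduces $X(f)$, so $X_C(f\circ q_M)=(Xf)\circ q_M=0$.

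The one computation requiring genuine care is $X_C(l_{df})$, which I would treat last. Here both the horizontal piece $\sum_j v^j\frac{\partial}{\partial x^j}$ and the vertical piece $\sum_{i,s}\frac{\partial v^i}{\partial x^s}y^s\frac{\partial}{\partial y^i}$ of $X_C$ act nontrivially: the first yields a term $\sum_{m,s}v^m\frac{\partial^2 f}{\partial x^m\partial x^s}y^s$ and the second yields $\sum_{m,s}\frac{\partial v^m}{\partial x^s}\frac{\partial f}{\partial x^m}y^s$. The key step, and the only place where anything must be recognized rather than merely computed, is to relabel the summation indices so that both terms carry a common factor $y^s$ and then to observe that the bracketed coefficient is precisely $\frac{\partial}{\partial x^s}\big(\sum_m v^m\frac{\partial f}{\partial x^m}\big)$ by the Leibniz rule. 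In other words $X_C(l_{df})=\sum_s\frac{\partial (Xf)}{\partial x^s}y^s=l_{d(Xf)}$, which vanishes once $X(f)=0$.

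Conceptually this records the natural intertwining identities $X_C(l_{df})=l_{d(Xf)}$ and $X_C(f\circ q_M)=(Xf)\circ q_M$, expressing that the complete lift matches the action of $X$ on $M$ with its action on base functions and linear functions on $TM$; the theorem is then the special case $Xf=0$. I expect no serious obstacle, since the argument is a short verification, with the mild bookkeeping of the product rule in $X_C(l_{df})$ being the only nonroutine point.
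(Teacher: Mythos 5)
Your proof is correct and follows essentially the same route as the paper: a direct coordinate computation showing $X_C(f\circ q_M)=(Xf)\circ q_M$, $X_V(l_{df})=Xf$, $X_V(f\circ q_M)=0$, and $X_C(l_{df})=\sum_s\frac{\partial(Xf)}{\partial x^s}y^s=l_{d(Xf)}$, after which the hypothesis $Xf=0$ finishes all four cases. In fact your write-up is cleaner than the paper's, whose displayed list contains a typographical slip (the identity $X_V(f\circ q_M)$ appears twice, the second occurrence evidently meant to be $X_C(l_{df})$), and your explicit Leibniz-rule bookkeeping for $X_C(l_{df})$ is exactly the computation the paper abbreviates.
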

\begin{proof}
In the beginning we assume that  for a initial vector field 
$X=\sum_{i=1}^{N}v^i\dfrac{\partial }{\partial x^i}$ we have $X(f)=\sum_{i=1}^{N}v^i\dfrac{\partial f}{\partial x^i}=0$ .
Next, a direct verification shows that
\begin{align}
 & X_C(f\circ  q_M)=X(f)=0,\\
 & X_V(f\circ  q_M)=\sum_{s=1}^{N}y^s\dfrac{\partial }{\partial x^s}X(f)=0,\\
 &  X_V(f\circ  q_M)=0,\\
 &  X_V(l_{df})=X(f)=0.
\end{align}
\end{proof}
Note that from the above theorem follows
that if $X(f)=0$ then $f$ is a Casimir function for a Poisson tensor $\pi_{X_{C},X_{V}}$.

\begin{theorem}
\label{theorem-3a}
Let $(M,\pi)$ be a Poisson manifold and let $c\in H_{\pi}^0(M)$ be a Casimir function for $\pi$. For each  Poisson vector field $X$ on $M$ 
\begin{equation}
\pi_{TM,X_{C},X_{V},c}=\pi_{TM}+\lambda c({\bf x}) \pi_{X_{C},X_{V}}
\end{equation}
is a Poisson tensor on $TM$.
\end{theorem}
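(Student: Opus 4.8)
The plan is to verify the Jacobi identity for $\pi_{TM,X_C,X_V,c}$ directly through the Schouten--Nijenhuis bracket, i.e.\ to show $[\pi_{TM,X_C,X_V,c},\pi_{TM,X_C,X_V,c}]=0$. Writing $\sigma:=\pi_{X_C,X_V}=X_C\wedge X_V$ and abbreviating $c:=c\circ q_M$, bilinearity together with the graded symmetry (\ref{p6}) (which for two bivectors gives $[\pi_{TM},c\sigma]=[c\sigma,\pi_{TM}]$) yields
\begin{equation}
[\pi_{TM}+\lambda c\sigma,\,\pi_{TM}+\lambda c\sigma]=[\pi_{TM},\pi_{TM}]+2\lambda[\pi_{TM},c\sigma]+\lambda^2[c\sigma,c\sigma].\nonumber
\end{equation}
The first summand vanishes because $\pi_{TM}$ is a Poisson tensor, so it remains to show that the cross term and the $\lambda^2$ term vanish separately.

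For the cross term I would apply the Leibniz rule (\ref{p6}) to the product $c\sigma=(c\circ q_M)\wedge\sigma$, obtaining
\begin{equation}
[\pi_{TM},c\sigma]=[\pi_{TM},c\circ q_M]\wedge\sigma+(c\circ q_M)[\pi_{TM},\sigma].\nonumber
\end{equation}
The first term is zero because, by (\ref{cas}), $c\circ q_M$ is a Casimir function of $\pi_{TM}$, so $[\pi_{TM},c\circ q_M]=\{\cdot,c\circ q_M\}_{TM}=0$. For the second term I expand $[\pi_{TM},X_C\wedge X_V]=[\pi_{TM},X_C]\wedge X_V-X_C\wedge[\pi_{TM},X_V]$; since $X$ is a Poisson vector field, Theorem~1 guarantees that both $X_C$ and $X_V$ are Poisson vector fields on $(TM,\pi_{TM})$, i.e.\ $[\pi_{TM},X_C]=-\mathcal{L}_{X_C}\pi_{TM}=0$ and likewise for $X_V$. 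Hence $[\pi_{TM},\sigma]=0$ and the whole cross term vanishes. It is precisely here that the Casimir hypothesis on $c$ and the Poisson hypothesis on $X$ enter.

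The $\lambda^2$ term is the main point. By Theorem~2 we already know $[\sigma,\sigma]=0$, and a routine Leibniz expansion shows that for any function $f$ and any bivector $\sigma$ with $[\sigma,\sigma]=0$ one has $[f\sigma,f\sigma]=2f\,\sigma\wedge\sigma(df,\cdot)$, where $\sigma(df,\cdot)$ denotes the vector field obtained by contracting $df$ into the first slot of $\sigma$. Thus everything reduces to computing $\sigma(d(c\circ q_M),\cdot)$. For the decomposable bivector $\sigma=X_C\wedge X_V$ this contraction equals $X_C(c\circ q_M)\,X_V-X_V(c\circ q_M)\,X_C$. Since $c\circ q_M$ depends only on the base coordinates $\mathbf{x}$, the vertical lift gives $X_V(c\circ q_M)=0$, while $X_C(c\circ q_M)=X(c)\circ q_M$ (these are the coordinate computations underlying Theorem~\ref{th-333}). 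Consequently $\sigma(d(c\circ q_M),\cdot)=(X(c)\circ q_M)\,X_V$ is proportional to $X_V$, and since $X_V$ already appears as a factor of $\sigma$ we get $\sigma\wedge\sigma(d(c\circ q_M),\cdot)=(X(c)\circ q_M)\,X_C\wedge X_V\wedge X_V=0$. Therefore $[c\sigma,c\sigma]=0$ as well.

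Combining the three vanishings gives $[\pi_{TM,X_C,X_V,c},\pi_{TM,X_C,X_V,c}]=0$, so $\pi_{TM,X_C,X_V,c}$ is Poisson. The one step requiring genuine care is the $\lambda^2$ term: one must fix the signs in the identity $[f\sigma,f\sigma]=2f\,\sigma\wedge\sigma(df,\cdot)$ and then recognize that the contraction $\sigma(d(c\circ q_M),\cdot)$ collapses onto a multiple of $X_V$. It is worth noting that the $\lambda^2$ term vanishes for \emph{any} vector field $X$ and any base function $c$; the Casimir and Poisson hypotheses are used only to kill the cross term.
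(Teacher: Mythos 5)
Your proof is correct and follows essentially the same route as the paper: expand the Schouten bracket bilinearly, kill the cross term using that $c\circ q_M$ is a Casimir of $\pi_{TM}$ and that $X_C$, $X_V$ are Poisson vector fields, and kill the $\lambda^2$ term. In fact your treatment of the $\lambda^2$ term is more careful than the paper's, which simply writes $[c\sigma,c\sigma]=\lambda^2c^2[\sigma,\sigma]$ and thereby drops the derivative correction term that you correctly identify and show to vanish because the contraction $\sigma(d(c\circ q_M),\cdot)$ is proportional to $X_V$ and hence dies upon wedging with $\sigma=X_C\wedge X_V$.
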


\begin{proof}
From (\ref{p6}) we know that
\begin{align}
& \left[\pi_{TM,X_{C},X_{V},c},\pi_{TM,X_{C},X_{V},c}\right]=\left[\pi_{TM},\pi_{TM}\right]+\lambda \left[\pi_{TM},c({\bf x})X_{C}\wedge X_{V}\right] +\\
&+\lambda \left[c({\bf x})X_{C}\wedge X_{V},\pi_{TM}\right]+\lambda^2 c^2({\bf x})\left[X_{C}\wedge X_{V},X_{C}\wedge X_{V}\right]=\nonumber\\
&=2\lambda c({\bf x})\left[\pi_{TM}, X_{C}\right]\wedge X_{V}-2\lambda c({\bf x})X_{C}\wedge[\pi_{TM},X_{V}]=0.\nonumber
\end{align}
Above we use that $\pi_{TM}$ and $X_{C}\wedge X_{V}$ are Poisson tensors and $X_{C}, X_{V}$ are Poisson vector fields on $(TM,\pi_{TM}).$
\end{proof}
This is a consequence of the facts that both bi-vector fields are Poisson tensors and $\pi_{X_{C},X_{V}}\in H^2_{\pi_{TM }}(TM)$.
In local coordinates $(x^1,\dots,x^N,y^1\dots, y^N)$ we get the following infinitesimal deformation of the Poisson tensor $\pi_{TM}$
\begin{equation}
\label{ten-3} 
\pi_{TM,X_{C},X_{V},c}  ({\bf x}, {\bf y})=
\end{equation}
$$
\begingroup\makeatletter\def\f@size{8}\check@mathfonts
\left(
\begin{array}{c|c}
 0 & \pi ({\bf x})+ \lambda c({\bf x})v({\bf x})v^{\top}({\bf x})\\
\hline
\pi ({\bf x})- \lambda c({\bf x})v({\bf x})v^{\top}({\bf x}) & \sum_{s=1}^{N}\left(\dfrac{\partial\pi }{\partial x^s}({\bf x})+
\lambda c({\bf x})\left( \dfrac{\partial v }{\partial x^s}({\bf x})v^{\top}({\bf x})-v({\bf x})\left( \dfrac{\partial v }{\partial x^s}({\bf x})\right)^{\top}\right)\right)y^s
\end{array}
\right).
\endgroup
$$
This type of Poisson structures has already appeared in our article \cite{AJK}, where we described in detail the algebroid structure associated with it.
The next statement describes the case that Casimir functions for a certain class of tensors do not change.
\begin{theorem}
\label{c-1}
Let $c_1,\dots, c_r$ be  Casimir functions for the Poisson structure $\pi$ such that $\mathcal{L}_{X}c_i=0$. Then the functions 
\begin{equation}
\label{cas-1-1}
c_i\circ q_M \quad \textrm{and}\quad l_{dc_i}=\sum_{s=1}^{N}\dfrac{\partial c_i}{\partial x_s}({\bf x})y_s, \quad i=1,\dots r,
\end{equation}
are the Casimir functions for the Poisson tensor $\pi_{TM,X_{C},X_{V},c}$. 
\end{theorem}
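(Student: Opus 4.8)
The plan is to use the standard characterization that a function $F$ is a Casimir of a Poisson tensor $\Pi$ precisely when its Hamiltonian vector field vanishes, equivalently when $\{F,G\}_{\Pi}=0$ for every $G\in C^{\infty}(TM)$; by the Leibniz rule it suffices to test this against the coordinate functions $G\in\{x^k,y^k\}$. Since the deformed tensor is the sum $\pi_{TM}+\lambda c(\mathbf{x})\,X_{C}\wedge X_{V}$ and the function $c(\mathbf{x})$ merely rescales the second bivector, the deformed bracket splits as
\[
\{F,G\}_{\pi_{TM,X_{C},X_{V},c}}=\{F,G\}_{TM}+\lambda c(\mathbf{x})\,(X_{C}\wedge X_{V})(dF,dG),
\]
and I would show that each summand vanishes separately when $F=c_i\circ q_M$ and when $F=l_{dc_i}$.

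For the first summand I would invoke the fact recorded after (\ref{cas}): because each $c_i$ is a Casimir of $\pi$, the functions $c_i\circ q_M$ and $l_{dc_i}$ are already Casimir functions of the undeformed tensor $\pi_{TM}$. Hence $\{c_i\circ q_M,G\}_{TM}=0$ and $\{l_{dc_i},G\}_{TM}=0$ for every $G$, so the undeformed part contributes nothing regardless of the deformation.

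For the second summand the decomposability of the bivector is the key point. For any functions $F,G$ one has
\[
(X_{C}\wedge X_{V})(dF,dG)=X_{C}(F)\,X_{V}(G)-X_{C}(G)\,X_{V}(F),
\]
so if \emph{both} $X_{C}(F)=0$ and $X_{V}(F)=0$ then this expression vanishes identically in $G$. Here the hypothesis $\mathcal{L}_{X}c_i=X(c_i)=0$ lets me apply Theorem \ref{th-333} with $f=c_i$, which delivers exactly the four identities $X_{C}(c_i\circ q_M)=X_{V}(c_i\circ q_M)=0$ and $X_{C}(l_{dc_i})=X_{V}(l_{dc_i})=0$. Substituting $F=c_i\circ q_M$ and then $F=l_{dc_i}$ into the displayed contraction forces the deformation term to vanish as well.

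Combining the two, $\{c_i\circ q_M,G\}_{\pi_{TM,X_{C},X_{V},c}}=0$ and $\{l_{dc_i},G\}_{\pi_{TM,X_{C},X_{V},c}}=0$ for all $G$, which is precisely the assertion that both families are Casimir functions of the deformed tensor. I do not expect a genuine obstacle here beyond correctly assembling the two earlier results; the only step that demands care is recognizing that contracting $F$ against the decomposable wedge $X_{C}\wedge X_{V}$ requires \emph{both} factors $X_{C}(F)$ and $X_{V}(F)$ to vanish, which is exactly what the full set of identities in Theorem \ref{th-333} supplies.
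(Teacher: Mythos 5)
Your proposal is correct and follows essentially the same route as the paper, which simply cites Theorem \ref{th-333} together with the fact (recorded after (\ref{cas})) that $c_i\circ q_M$ and $l_{dc_i}$ are Casimirs of the undeformed tensor $\pi_{TM}$; you have merely made explicit the evaluation of the decomposable bivector $X_{C}\wedge X_{V}$ on $dF$, which is the step the paper leaves implicit. No gaps.
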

\begin{proof}
This is the consequence of Theorem \ref{th-333} and formula (\ref{cas-3}).
\end{proof}

In the next step, we will consider a situation a little more general.
We will assume that we have two non-proportional Poisson vector fields $X,Y\in \mathcal{X}(M)$ on a Poisson manifold $(M,\pi)$ expressed in local coordinates as
\begin{equation}
X=\sum_{i=1}^N v^i({\bf x})\frac{\partial}{\partial x^i}, \quad Y=\sum_{i=1}^N w^i({\bf x})\frac{\partial}{\partial x^i}, 
\end{equation}
where $v^i, w^i\in C^{\infty}(M)$. Then we obtain four Poisson vector fields  on a Poisson manifold $(TM,\pi_{TM})$
\begin{align}
\label{vec}
& X_{C}=\sum_{i=1}^N v^i({\bf x})\frac{\partial}{\partial x^i}+\sum_{i,s=1}^N \frac{\partial v^i}{\partial x^s}({\bf x})y^s\frac{\partial}{\partial y^i}, &&\!\!\!
Y_{C}=\sum_{i=1}^N w^i({\bf x})\frac{\partial}{\partial x^i}+\sum_{i,s=1}^N \frac{\partial w^i}{\partial x^s}({\bf x})y^s\frac{\partial}{\partial y^i},\nonumber\\
& X_{V}=\sum_{i=1}^N v^i({\bf x})\frac{\partial}{\partial y^i}, &&
Y_{V}=\sum_{i=1}^N w^i({\bf x})\frac{\partial}{\partial y^i}.
\end{align}
In this case, we can  build three different types of bi-vector fields.
A detailed analysis of these cases will be presented in the following statements.

\begin{theorem}
\label{theorem-2}
If $X_{V}$ and $Y_{V}$ are given by (\ref{vec})  then
\begin{equation}
\label{con-1}
\pi_{X_{V}, Y_{V}}=X_{V}\wedge Y_{V}=\left( X\wedge Y\right)_{V}
\end{equation}
is a Poisson tensor on $TM$.
\end{theorem}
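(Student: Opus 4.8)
The plan is to handle the two assertions in (\ref{con-1}) separately: first the algebraic identity $X_V\wedge Y_V=(X\wedge Y)_V$, and then the Poisson property $[\pi_{X_V,Y_V},\pi_{X_V,Y_V}]=0$. As in the proof of (\ref{def}), antisymmetry of the bivector is automatic, so once the identity is in hand only the vanishing of the Schouten--Nijenhuis self-bracket needs to be checked.

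First I would verify $X_V\wedge Y_V=(X\wedge Y)_V$ straight from the coordinate definitions of the vertical lift. With $X=\sum_i v^i\partial/\partial x^i$ and $Y=\sum_i w^i\partial/\partial x^i$ one has $X_V=\sum_i v^i\partial/\partial y^i$ and $Y_V=\sum_i w^i\partial/\partial y^i$, so $X_V\wedge Y_V=\sum_{i,j}v^iw^j\,\partial/\partial y^i\wedge\partial/\partial y^j$; on the other hand $X\wedge Y=\sum_{i,j}v^iw^j\,\partial/\partial x^i\wedge\partial/\partial x^j$, and its vertical lift replaces each $\partial/\partial x^a$ by $\partial/\partial y^a$, producing the identical expression. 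This exhibits $\pi_{X_V,Y_V}$ as the vertical lift $W_V$ of the single $2$-vector $W=X\wedge Y\in\mathcal{X}^2(M)$, and in particular as a bona fide antisymmetric bivector on $TM$.

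For the Jacobi identity the quickest route is to invoke the commutator relation (\ref{3-c}) at the level of multivectors rather than vector fields. Since $[U_V,Z_V]=0$ holds for vertical lifts of $k$- and $l$-vector fields of arbitrary rank, taking $U=Z=W$ gives immediately $[\pi_{X_V,Y_V},\pi_{X_V,Y_V}]=[W_V,W_V]=0$, i.e.\ $\pi_{X_V,Y_V}$ is Poisson. If instead I wanted to stay at the level of vector fields, mirroring the computation used for (\ref{def}), I would expand $[X_V\wedge Y_V,X_V\wedge Y_V]$ by the graded Leibniz and antisymmetry rules of (\ref{p6}); every term of the expansion contains one of the factors $[X_V,X_V]$, $[X_V,Y_V]$, $[Y_V,Y_V]$, all of which vanish by (\ref{3-c}), so the bracket collapses to zero.

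I do not anticipate a genuine obstacle here; the only care required is sign-bookkeeping in the graded identities of the Schouten--Nijenhuis bracket, and, for the short route, the observation that (\ref{3-c}) is stated for lifts of $k$- and $l$-vector fields so that it genuinely applies to the $2$-vector $W$. It is worth noting that, in contrast with the mixed tensor $\pi_{X_C,X_V}$ of (\ref{def}), the present bivector is purely vertical and its being Poisson does not actually use the hypothesis that $X$ and $Y$ are Poisson vector fields: the vanishing of all vertical--vertical brackets forces $[\pi_{X_V,Y_V},\pi_{X_V,Y_V}]=0$ on its own.
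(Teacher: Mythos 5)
Your proposal is correct and follows essentially the same route as the paper: the paper expands $[\pi_{X_V,Y_V},\pi_{X_V,Y_V}]$ via the graded Leibniz rule (\ref{p6}) to obtain $2[Y_V,X_V]\wedge X_V\wedge Y_V$, which vanishes by (\ref{3-c}), exactly as in your second variant (and your first variant, applying (\ref{3-c}) directly to the $2$-vector $W=X\wedge Y$, is just a compressed form of the same argument). Your explicit coordinate check of $X_V\wedge Y_V=(X\wedge Y)_V$ and the remark that the Poisson hypothesis on $X$ and $Y$ is not actually needed for this particular tensor are both accurate additions.
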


\begin{proof}
A direct calculation gives us
\begin{equation}
[\pi_{X_{V}, Y_{V}}, \pi_{X_{V}, Y_{V}}]=2[Y_{V}, X_{V}]\wedge X_{V}\wedge Y_{V}=0
\end{equation}
from (\ref{p6}) and (\ref{3-c}).
\end{proof}
This is a consequence of simple observation that   any structure of bi-vector field   with a matrix form
\begin{equation}
\left(
\begin{array}{c|c}
 0 & 0\\
\hline
0 & A({\bf x})
\end{array}
\right),
\end{equation}
where $A({\bf x})\in \mathfrak{so}(\frac{N}{2})$, is a Poisson tensor on $TM$.
Moreover, the Poisson tensor (\ref{con-1}) is compatible with the Poisson tensor $\pi_{TM}$.

\begin{theorem}
\label{VV}
Let $(M,\pi)$ be a Poisson manifold and let $c\in H_{\pi}^0(M)$ be a Casimir function for $\pi$. For any Poisson vector fields $X, Y$ on $M$
\begin{equation}
\label{ten-kon2}
\pi_{TM,X_{V},Y_{V},c}=\pi_{TM}+\lambda c({\bf x})X_{V}\wedge Y_{V}
\end{equation}
is a Poisson tensor on $TM$.
\end{theorem}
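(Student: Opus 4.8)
The plan is to imitate the proof of Theorem~\ref{theorem-3a} almost verbatim: compute the Schouten--Nijenhuis self-bracket of the candidate tensor and show it vanishes, which is precisely the Jacobi identity $[\pi_{TM,X_V,Y_V,c},\pi_{TM,X_V,Y_V,c}]=0$. Writing $Z=X_V\wedge Y_V$ and using $\mathbb{R}$-bilinearity of the bracket together with the graded antisymmetry~(\ref{p6}) (for bi-vectors the sign works out so that the two mixed terms coincide), I would first expand
\begin{equation}
[\pi_{TM}+\lambda cZ,\pi_{TM}+\lambda cZ]=[\pi_{TM},\pi_{TM}]+2\lambda[\pi_{TM},cZ]+\lambda^2[cZ,cZ].
\end{equation}
The first summand is zero because $\pi_{TM}$ is a Poisson tensor, so it remains to kill the two $\lambda$-terms.

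For the cross term I would use that $c=c\circ q_M$ is a Casimir function of $\pi_{TM}$ — this is guaranteed by~(\ref{cas}), since $c$ is Casimir for $\pi$ — so that $[\pi_{TM},c]=0$ and the Leibniz rule of~(\ref{p6}) lets the scalar $c$ be factored out. Then
\begin{equation}
[\pi_{TM},cZ]=c\,[\pi_{TM},X_V\wedge Y_V]=c\big([\pi_{TM},X_V]\wedge Y_V-X_V\wedge[\pi_{TM},Y_V]\big).
\end{equation}
By the first theorem of this section (which establishes that the vertical lift~(\ref{psvf2}) of a Poisson vector field is again Poisson), both $X_V$ and $Y_V$ satisfy $[\pi_{TM},X_V]=-\mathcal{L}_{X_V}\pi_{TM}=0$ and $[\pi_{TM},Y_V]=0$, so the whole cross term vanishes.

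The step I expect to be the genuine obstacle is the $\lambda^2$ term, because the Schouten--Nijenhuis bracket is not $C^\infty$-linear and the factor $c(\mathbf{x})$ cannot simply be pulled out of $[cZ,cZ]$ merely by invoking Theorem~\ref{theorem-2}. The clean way around this is to observe that $cZ=c\,(X\wedge Y)_V=(c\,X\wedge Y)_V$ is itself the vertical lift of a bi-vector field on $M$: the coefficients of any vertical lift depend only on $\mathbf{x}$, while its vector-field factors are the mutually commuting $\partial/\partial y^i$, which annihilate those coefficients. Consequently the Schouten bracket of any two vertical lifts is zero — the multivector analogue of~(\ref{3-c}) — and in particular $[cZ,cZ]=0$. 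Equivalently, one checks that every interior contraction $\iota_{dc}(X_V\wedge Y_V)$ forced by the Leibniz expansion vanishes, because $dc$ is horizontal while $X_V\wedge Y_V$ is purely vertical, leaving $c^2\,[X_V\wedge Y_V,X_V\wedge Y_V]=0$ by Theorem~\ref{theorem-2}.

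Adding the three contributions yields $[\pi_{TM,X_V,Y_V,c},\pi_{TM,X_V,Y_V,c}]=0$, so the bi-vector is a Poisson tensor. I would stress in the write-up that, unlike the complete-lift case, the vanishing here is exact in $\lambda$ rather than merely infinitesimal, precisely because the vertical--vertical structure makes the quadratic term disappear identically.
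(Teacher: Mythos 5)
Your proof is correct and follows essentially the same route as the paper: the paper's own proof simply cites that $\pi_{TM}$ and $X_V\wedge Y_V$ are Poisson tensors and that $[\pi_{TM},X_V\wedge Y_V]=0$ because $X_V$, $Y_V$ are Poisson vector fields for $\pi_{TM}$. Your treatment is in fact more careful than the paper's, since you justify pulling the factor $c(\mathbf{x})$ out of the cross term via the Casimir property of $c\circ q_M$ and you handle the non-$C^\infty$-linearity of the Schouten bracket in the $\lambda^2$ term by noting that $c\,(X\wedge Y)_V$ is itself a vertical lift, points the paper passes over silently.
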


\begin{proof}
The statement follows from the facts that $\pi_{TM}$, $\pi_{X_{V}, Y_{V}}$ are the Poisson tensors and the bi-vector field (\ref{con-1}) belongs to the second Poisson cohomology group $\pi_{X_{V}, Y_{V}}\in H^2_{\pi_{TM}}(TM)$, i.e. $[\pi_{TM}, X_{V}\wedge Y_{V}]=0$.
\end{proof}
The local expression of (\ref{ten-kon2}) is given by
\begin{equation}
\begingroup\makeatletter\def\f@size{8}\check@mathfonts 
\pi_{T,X_{V},Y_{V},c}  ({\bf x}, {\bf y})=\left(
\begin{array}{c|c}
 0 & \pi ({\bf x})\\
\hline
\pi ({\bf x}) 
& \sum_{s=1}^{N}\dfrac{\partial\pi }{\partial x^s}({\bf x})y^s
+ \lambda c({\bf x})\left(v({\bf x})w^{\top}({\bf x})-w({\bf x})v^{\top}({\bf x})\right)
\end{array}
\right),
\endgroup
\end{equation}
where $v^{\top}=(v^1,\dots, v^N)$ and $w^{\top}=(w^1,\dots, w^N)$.

Additionally if $X\wedge Y$ is also a Poisson tensor on $M$ then this is the lift of bi-Hamiltonian structure $(M,\pi,X\wedge Y)$ to $TM$, see \cite{AJ}.
This is the case if the following condition is satisfied
\begin{equation}
\label{55}
[X\wedge Y, X\wedge Y]=2[X,Y]\wedge X\wedge Y=
\end{equation}
$$ =
\sum_{i,j,k,n=1}^{N}
v^j({\bf x}) w^k ({\bf x}) \left( v^n({\bf x})\dfrac{\partial w^i}{\partial x^n}({\bf x})-w^n({\bf x})\dfrac{\partial v^i}{\partial x^n}({\bf x})\right)
\dfrac{\partial}{\partial x^i} \wedge \dfrac{\partial}{\partial x^j} \wedge \dfrac{\partial}{\partial x^k}=0 .
$$
From the equality for Lichnerowicz--Poisson differential $\delta_{\pi_{TM}}\left(\left( X\wedge Y\right)_{V}\right)=\left(\delta_{\pi}\left( X\wedge Y\right)\right)_{V}$ the map $H_{\pi}^2(M)\ni X\wedge Y\mapsto \left( X\wedge Y\right)_{V}\in H^2_{\pi_{TM}}(TM)$ is a homomorphism of Poisson cohomology space, see \cite{Mba}.

\begin{theorem}
\label{theorem-last-V}
\begin{enumerate}
\item Let $X_i\in\mathcal{X}(M)$ for $i=1,2,3,4$  and let   $X_{i,V}$ be the  vertical lifts of the vector fields $X_i$ on $TM$.   Then
\begin{align}
\label{con-4-V}
& \pi_{X_{1,V}, X_{2,V},X_{3,V}, X_{4,V}}=X_{1,V}\wedge X_{2,V}+X_{3,V}\wedge X_{4,V}
\end{align}
is the   Poisson tensor on $TM$.
\item 
Let $(M,\pi)$ be a Poisson manifold and let $c\in H_{\pi}^0(M)$ be a Casimir function for $\pi$.
For any Poisson vector fields  $X_i $ for $i=1,2,3,4$ on $M$ 
\begin{equation}
\label{ten-kon2-V}
\pi_{TM,X_{1,V}, X_{2,V},X_{3,V}, X_{4,V},c}=\pi_{TM}+\lambda c({\bf x})\left( X_{1,V}\wedge X_{2,V}+X_{3,V}\wedge X_{4,V}\right)
\end{equation}
is a Poisson tensor on $TM$.
\end{enumerate}
\end{theorem}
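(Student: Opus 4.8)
The plan is to reduce both parts to facts already established: the vanishing of the Schouten--Nijenhuis brackets of vertical lifts, equation~(\ref{3-c}); Theorem~\ref{theorem-2}; the fact recorded in~(\ref{cas}) that $c\circ q_M$ is a Casimir of $\pi_{TM}$; and Theorem~1, which ensures that each $X_{i,V}$ is a Poisson vector field on $(TM,\pi_{TM})$ once $X_i$ is Poisson on $(M,\pi)$.

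For part~(1) I would set $\omega=X_{1,V}\wedge X_{2,V}+X_{3,V}\wedge X_{4,V}$; this is antisymmetric by construction, so it is enough to check $[\omega,\omega]=0$. Using bilinearity and the symmetry of the bracket on bi-vectors I would expand $[\omega,\omega]=[\pi_{12},\pi_{12}]+2\,[\pi_{12},\pi_{34}]+[\pi_{34},\pi_{34}]$, with $\pi_{12}=X_{1,V}\wedge X_{2,V}$ and $\pi_{34}=X_{3,V}\wedge X_{4,V}$. The two diagonal terms are zero by Theorem~\ref{theorem-2}. For the cross term I would apply the graded Leibniz rule~(\ref{p6}) twice to peel off the wedge factors, leaving a combination of brackets $[X_{i,V},X_{j,V}]$ that all vanish by~(\ref{3-c}). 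More economically, one may simply note that $\omega$ has the block form $\left(\begin{smallmatrix}0&0\\0&A(\mathbf{x})\end{smallmatrix}\right)$ with $A$ skew-symmetric and independent of $\mathbf{y}$, so that the observation following Theorem~\ref{theorem-2} applies verbatim.

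For part~(2), abbreviating $c=c\circ q_M$, I would expand $[\pi_{TM}+\lambda c\,\omega,\,\pi_{TM}+\lambda c\,\omega]=[\pi_{TM},\pi_{TM}]+2\lambda\,[\pi_{TM},c\,\omega]+\lambda^2[c\,\omega,c\,\omega]$. The first bracket vanishes since $\pi_{TM}$ is Poisson. For the last, since $c$ is pulled back from the base I would rewrite $c\,\omega=(cX_1)_V\wedge X_{2,V}+(cX_3)_V\wedge X_{4,V}$, which is again of the shape handled in part~(1) and hence Poisson, so $[c\,\omega,c\,\omega]=0$. For the middle bracket I would use that $c\circ q_M$ is a Casimir of $\pi_{TM}$, whence $[\pi_{TM},c]=0$ and the Leibniz rule collapses $[\pi_{TM},c\,\omega]$ to $c\,[\pi_{TM},\omega]$; the surviving factor $[\pi_{TM},\omega]$ then vanishes upon distributing the bracket over the two wedges and invoking $[\pi_{TM},X_{i,V}]=0$, valid because each $X_{i,V}$ is a Poisson vector field by Theorem~1. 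Hence $\omega\in H^2_{\pi_{TM}}(TM)$ and all three contributions are zero.

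The only genuinely load-bearing step is the vanishing of the cross term $[\pi_{12},\pi_{34}]$ in part~(1): this is exactly where the mutual commutativity of vertical lifts~(\ref{3-c}) is indispensable, since without it the two Poisson tensors $\pi_{12}$ and $\pi_{34}$ need not be compatible and their sum could fail the Jacobi identity. Everything else is routine bookkeeping with the graded Leibniz rule, entirely parallel to the proofs of Theorems~\ref{theorem-2}, \ref{theorem-3a} and~\ref{VV}.
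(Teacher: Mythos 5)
Your proposal is correct and follows exactly the route the paper intends: the paper's own proof of Theorem \ref{theorem-last-V} consists only of the remark that it is ``obtained by direct calculation,'' and your computation—expanding $[\omega,\omega]$ via the graded Leibniz rule so that every surviving term contains a bracket $[X_{i,V},X_{j,V}]=0$ from~(\ref{3-c}), and for part~(2) absorbing the Casimir into the vertical lifts and using $[\pi_{TM},X_{i,V}]=0$ from Theorem~1—is precisely that calculation, carried out in the same style as the proofs of Theorems~\ref{theorem-2}, \ref{VV} and~\ref{theorem-last}. In effect you have supplied the details the paper omits, including the one genuinely load-bearing point (the vanishing of the cross term $[\pi_{12},\pi_{34}]$), so nothing further is needed.
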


\begin{proof}
Proof  is obtained by direct calculation.
\end{proof}
{\bf Remark:} The above procedure can be repeated many times.

\begin{theorem}
\label{theorem-3}
Let $X,Y\in\mathcal{X}(M)$ be such that $[X,Y]=0$ and let $X_{C}$,  $Y_{C}$, $Y_{V}$ be the complete and vertical lifts of the vectors $X$, $Y$ on $TM$, respectively.   Then
\begin{align}
\label{con-3}
& \pi_{X_{C}, Y_{V}}=X_{C}\wedge Y_{V},\\
\label{con-4}
& \pi_{X_{C}, Y_{C}}=X_{C}\wedge Y_{C}
\end{align}
are  Poisson tensors on $TM$.
\end{theorem}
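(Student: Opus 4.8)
The plan is to verify the Jacobi identity in the form $[\pi,\pi]=0$ for each of the two bivectors, since both $\pi_{X_{C},Y_{V}}$ and $\pi_{X_{C},Y_{C}}$ are wedge products of two vector fields and are therefore automatically antisymmetric. This reduces the whole statement to computing the self-bracket of a decomposable bivector and then invoking the commutator relations for lifts recorded just before Theorem~\ref{theorem-3}, namely $[X_{C},Y_{V}]=[X,Y]_{V}$ and $[X_{C},Y_{C}]=[X,Y]_{C}$.

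First I would record the general identity that, for any two vector fields $A,B$ on $TM$, the graded Leibniz rule and antisymmetry of (\ref{p6}), together with $[A,A]=[B,B]=0$, give
\[
[A\wedge B,\,A\wedge B]=2\,[A,B]\wedge A\wedge B .
\]
This is exactly the expansion already carried out in the proof that $\pi_{X_{C},X_{V}}$ is a Poisson tensor; the only difference here is that $A$ and $B$ now arise from two distinct base vector fields, which does not affect the sign bookkeeping at all. The upshot is that the self-bracket of a decomposable bivector is governed entirely by the single commutator $[A,B]$.

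Next I would specialize. For (\ref{con-3}) I take $A=X_{C}$ and $B=Y_{V}$, so that the relation $[X_{C},Y_{V}]=[X,Y]_{V}$ yields $[\pi_{X_{C},Y_{V}},\pi_{X_{C},Y_{V}}]=2\,[X,Y]_{V}\wedge X_{C}\wedge Y_{V}$. For (\ref{con-4}) I take $A=X_{C}$ and $B=Y_{C}$, so that $[X_{C},Y_{C}]=[X,Y]_{C}$ gives $[\pi_{X_{C},Y_{C}},\pi_{X_{C},Y_{C}}]=2\,[X,Y]_{C}\wedge X_{C}\wedge Y_{C}$. In both cases the hypothesis $[X,Y]=0$ forces the base bracket to vanish, and since both the vertical and the complete lift of the zero vector field are zero, each self-bracket vanishes identically. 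Hence both bivectors are Poisson.

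The only obstacle, such as it is, lies in the sign tracking of the graded Leibniz rule when expanding $[A\wedge B,A\wedge B]$; but this is routine and identical to the computation already displayed for $\pi_{X_{C},X_{V}}$, so no genuinely new difficulty arises. I would emphasize that the commutation hypothesis $[X,Y]=0$ enters in precisely one place in each case, which makes transparent why commuting vector fields are the natural setting for this statement, and why $X_{V}\wedge Y_{V}$ (handled separately in Theorem~\ref{theorem-2}) requires no such hypothesis, thanks to (\ref{3-c}).
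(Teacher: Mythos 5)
Your proposal is correct and follows essentially the same route as the paper: reduce the self-bracket of the decomposable bivector to $[A\wedge B, A\wedge B]=2[A,B]\wedge A\wedge B$, then apply the lift commutator identities $[X_{C},Y_{V}]=[X,Y]_{V}$ and $[X_{C},Y_{C}]=[X,Y]_{C}$ together with the hypothesis $[X,Y]=0$. The paper's proof is exactly this two-line Schouten--Nijenhuis computation, so nothing further is needed.
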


\begin{proof}
After direct calculation of the Jacobi identity using  the Schouten--Nijenhuis bracket  we obtain
\begin{align}
&[\pi_{X_{C}, Y_{V}}, \pi_{X_{C}, Y_{V}}]=2[X_{C},Y_{V}]\wedge X_{C}\wedge Y_{V} =2[X,Y]_{V}\wedge X_{C}\wedge Y_{V}=0.
\end{align}
Similarly for the second construction
\begin{align}
&[\pi_{X_{C}, Y_{C}},\pi_{X_{C}, Y_{C}}]=2[X_{C}, Y_{C}]\wedge X_{C} \wedge Y_{C}=2[X, Y]_{C}\wedge X_{C} \wedge Y_{C}=0.
 \end{align}
\end{proof}

\begin{theorem}
\label{C}
Let $(M,\pi)$ be a Poisson manifold and let $c\in H_{\pi}^0(M)$ be a Casimir function for $\pi$. For any Poisson vector fields $X, Y$ on $M$ such that $[X,Y]=0$
\begin{align}
\label{ten-kon2-C}
& \pi_{TM,X_{C},Y_{V},c}=\pi_{TM}+\lambda c({\bf x})X_{C}\wedge Y_{V},\\
& \pi_{TM,X_{C},Y_{C},c}=\pi_{TM}+\lambda c({\bf x})X_{C}\wedge Y_{C}
\end{align}
are the Poisson tensors on $TM$.
\end{theorem}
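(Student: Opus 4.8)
The plan is to treat both bi-vectors uniformly as a first-order deformation $\pi_{TM}+\lambda c A$, where $A$ denotes either $X_{C}\wedge Y_{V}$ or $X_{C}\wedge Y_{C}$, and to verify the Jacobi identity by showing that the Schouten--Nijenhuis self-bracket vanishes. Expanding by bilinearity and using that the bracket of two bi-vectors is symmetric (a consequence of (\ref{p6})), I would split the computation into three pieces:
\begin{equation}
[\pi_{TM}+\lambda cA,\pi_{TM}+\lambda cA]=[\pi_{TM},\pi_{TM}]+2\lambda[\pi_{TM},cA]+\lambda^2[cA,cA].
\end{equation}
The first piece is zero because $\pi_{TM}$ is a Poisson tensor, so the argument reduces to showing that the last two pieces vanish separately. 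This exactly mirrors the structure of the proof of Theorem~\ref{theorem-3a}.

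For the cross term the key input is that $c\circ q_M$ is a Casimir function of $\pi_{TM}$: since $c\in H^0_{\pi}(M)$, this is precisely the Casimir-lifting statement (\ref{cas}), whence $[\pi_{TM},c]=0$. The Leibniz rule in (\ref{p6}) then gives $[\pi_{TM},cA]=c[\pi_{TM},A]$, and a second application of Leibniz expands $[\pi_{TM},A]$ into terms of the form $[\pi_{TM},X_{C}]\wedge(\cdot)$ and $(\cdot)\wedge[\pi_{TM},Y_{V}]$ (respectively $[\pi_{TM},Y_{C}]$). Each of these factors vanishes because $X_{C}$, $Y_{V}$ and $Y_{C}$ are Poisson vector fields on $(TM,\pi_{TM})$, i.e. $[\pi_{TM},X_{C}]=[\pi_{TM},Y_{V}]=[\pi_{TM},Y_{C}]=0$, which is the content of the first theorem of this section applied to $X$ and to $Y$. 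Hence $[\pi_{TM},cA]=0$.

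For the quadratic term I would absorb the function into one leg, writing $cA=(cX_{C})\wedge Y_{V}$ (respectively $(cX_{C})\wedge Y_{C}$), so that $[cA,cA]=2[cX_{C},Y_{V}]\wedge(cX_{C})\wedge Y_{V}$ exactly as in the proof of Theorem~\ref{theorem-3}. Using the vector-field identity $[fU,W]=f[U,W]-W(f)U$ one gets $[cX_{C},Y_{V}]=c[X_{C},Y_{V}]-Y_{V}(c)X_{C}$. Since $c$ depends only on the base coordinates, $Y_{V}(c)=0$, and the commutator relation $[X_{C},Y_{V}]=[X,Y]_{V}$ leaves $c[X,Y]_{V}$, which vanishes by the hypothesis $[X,Y]=0$. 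In the second case $Y_{C}(c)=Y(c)$ need not vanish, but the corresponding term carries a repeated factor $X_{C}\wedge X_{C}=0$ and therefore drops out, so one is again left with $2c^2[X,Y]_{C}\wedge X_{C}\wedge Y_{C}=0$. Thus $[cA,cA]=0$ in both cases, completing the verification.

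The main obstacle, as already in Theorem~\ref{theorem-3a}, is the bookkeeping of the Casimir $c$ inside the Schouten--Nijenhuis bracket: one must be careful that $[cA,cA]$ is \emph{not} simply $c^2[A,A]$ and must track the derivative-of-$c$ contributions. The computation above shows these contributions are harmless precisely because $c$ is a base function (killing $Y_{V}(c)$) and because of the antisymmetry $X_{C}\wedge X_{C}=0$ (killing the surviving $Y_{C}(c)$ term); the commutativity hypothesis $[X,Y]=0$ is then exactly what is needed to annihilate the remaining piece. Everything else follows from facts already established: $\pi_{TM}$ is Poisson, $X_{C},Y_{C},Y_{V}$ are Poisson vector fields, and $A$ is a Poisson tensor by Theorem~\ref{theorem-3}, so $cA\in H^2_{\pi_{TM}}(TM)$ and the two deformed tensors are genuine infinitesimal deformations of $\pi_{TM}$.
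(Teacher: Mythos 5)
Your proposal is correct and follows the same overall route as the paper's proof: expand the self-bracket by bilinearity, discard $[\pi_{TM},\pi_{TM}]$, and reduce the cross term to $c\,[\pi_{TM},X_{C}\wedge Y_{V}]=0$ (respectively with $Y_{C}$) using that $c\circ q_{M}$ is a Casimir of $\pi_{TM}$ and that the lifts are Poisson vector fields on $(TM,\pi_{TM})$. Where you genuinely add value is the quadratic term: the paper's one-line computation simply equates the whole self-bracket with $2\lambda c\,[\pi_{TM},X_{C}\wedge Y_{V}]$, silently dropping $\lambda^{2}[cA,cA]$, and (as in the proof of Theorem~\ref{theorem-3a}) one cannot just write this as $c^{2}[A,A]$. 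Your bookkeeping --- absorbing $c$ into one leg, using $[cX_{C},Y_{V}]=c[X_{C},Y_{V}]-Y_{V}(c)X_{C}$, noting that $Y_{V}(c)=0$ because $c$ is a base function, and that in the $Y_{C}$ case the surviving $Y_{C}(c)X_{C}$ term is annihilated by $X_{C}\wedge X_{C}=0$, so that only $c^{2}[X,Y]$ remains and the hypothesis $[X,Y]=0$ finishes the job --- is exactly the missing verification. So this is not a different method but a completion of the paper's argument; it also makes transparent why the commutativity hypothesis is needed here but not in Theorem~\ref{VV}.
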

\begin{proof}
By calculation of the Schouten--Nijenhuis bracket we obtain
\begin{equation}
[\pi_{TM,X_{C},Y_{V},c},\pi_{TM,X_{C},Y_{V},c}]=2 \lambda c({\bf x})[\pi_{TM},X_{C}\wedge Y_{V}]=0,
\end{equation}
because $X_{C}$ and $Y_{V}$ are Poisson vector fields for $\pi_{TM}$.
The proof for the second bi-vector is completely analogous.
\end{proof}

In the local coordinates expressions of the Poisson structures introduced in Theorem \ref{C} are the following
\begin{equation}
\label{ten-3-c}
\pi_{TM,X_{C},Y_{V},c}  ({\bf x}, {\bf y})=
\end{equation}
$$
\begingroup\makeatletter\def\f@size{8}\check@mathfonts 
\left(
\begin{array}{c|c}
 0 & \pi ({\bf x})+ \lambda c({\bf x})v({\bf x})w^{\top}({\bf x})\\
\hline
\pi ({\bf x})- \lambda c({\bf x})v({\bf x})w^{\top}({\bf x}) & \sum_{s=1}^{N}\left(\dfrac{\partial\pi }{\partial x^s}({\bf x})+
\lambda c({\bf x})\left( \dfrac{\partial v }{\partial x^s}({\bf x})w^{\top}({\bf x})-w({\bf x})\left( \dfrac{\partial v }{\partial x^s}({\bf x})\right)^{\top}\right)\right)y^s
\end{array}
\right),
\endgroup
$$
\begin{equation}
\label{ten-3-d}
\pi_{TM,X_{C},Y_{C},c}  ({\bf x}, {\bf y})=
\end{equation}
$$\left(
\begingroup\makeatletter\def\f@size{4}\check@mathfonts 
\begin{array}{c|c}
 v({\bf x})w^{\top}({\bf x}) -w({\bf x})v^{\top}({\bf x})  & \pi ({\bf x})+ \lambda c({\bf x})\sum_{s=1}^{N}
\left( \dfrac{\partial v }{\partial x^s}({\bf x})w^{\top}({\bf x})-w({\bf x})\left( \dfrac{\partial v }{\partial x^s}({\bf x})\right)^{\top}\right)y^s\\
\hline
\pi ({\bf x})- \lambda c({\bf x})\sum_{s=1}^{N}
\left( \dfrac{\partial v }{\partial x^s}({\bf x})w^{\top}({\bf x})-w({\bf x})\left( \dfrac{\partial v }{\partial x^s}({\bf x})\right)^{\top}\right)y^s
 & \sum_{s=1}^{N}\left(\dfrac{\partial\pi }{\partial x^s}({\bf x})+
\lambda c({\bf x})\sum_{m=1}^{N}\left( \dfrac{\partial v }{\partial x^s}({\bf x})
\left( \dfrac{\partial w }{\partial x^s}({\bf x})\right)^{\top}
-\dfrac{\partial w }{\partial x^s}({\bf x})
\left( \dfrac{\partial v }{\partial x^s}({\bf x})\right)^{\top}
\right)y^m\right)y^s
\end{array}
\endgroup
\right).
$$

\begin{theorem}
Let $c_1,\dots, c_r$ be  Casimir functions for the Poisson structure $\pi$ such that $\mathcal{L}_{X}c_i=0$, $\mathcal{L}_{Y}c_i=0$. Then the functions 
\begin{equation}
\label{cas-1-1-1}
c_i\circ q_M \quad \textrm{and}\quad l_{dc_i}=\sum_{s=1}^{N}\dfrac{\partial c_i}{\partial x_s}({\bf x})y_s, \quad i=1,\dots r,
\end{equation}
are the Casimir functions for the Poisson tensors $\pi_{TM,X_{V},Y_{V},c}$, $\pi_{TM,X_{C},Y_{V},c}$ and $\pi_{TM,X_{C},Y_{C},c}$. 
\end{theorem}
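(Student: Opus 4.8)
The plan is to verify the Casimir condition in its bracket form: a function $F$ is a Casimir of a Poisson tensor $\Pi$ precisely when $[\Pi,F]=0$ (equivalently, when its Hamiltonian vector field $\Pi^\sharp(dF)$ vanishes). Since all three deformed tensors share the common shape $\Pi=\pi_{TM}+\lambda\,c({\bf x})\,W_1\wedge W_2$, where $(W_1,W_2)$ equals $(X_V,Y_V)$, $(X_C,Y_V)$ or $(X_C,Y_C)$, I would treat the three cases uniformly. Letting $F$ range over $c_i\circ q_M$ and $l_{dc_i}$, bilinearity of the Schouten--Nijenhuis bracket gives
\begin{equation*}
[\Pi,F]=[\pi_{TM},F]+\lambda\,[c({\bf x})\,W_1\wedge W_2,F],
\end{equation*}
and I would show both summands vanish.

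For the first summand I would simply invoke the fact recalled in the preliminaries (formula (\ref{cas})): since each $c_i$ is a Casimir of $\pi$, both $c_i\circ q_M$ and $l_{dc_i}$ are already Casimir functions of $\pi_{TM}$, so $[\pi_{TM},F]=0$. For the second summand I would expand the bracket of a function against a decomposable bivector using the Leibniz and antisymmetry rules (\ref{p6}); absorbing the scalar factor $c({\bf x})=c\circ q_M$ into one leg yields
\begin{equation*}
[c({\bf x})\,W_1\wedge W_2,F]=c({\bf x})\big(W_2(F)\,W_1-W_1(F)\,W_2\big),
\end{equation*}
so the entire deformation contribution is controlled by the two derivatives $W_1(F)$ and $W_2(F)$.

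It then remains to show $W_1(F)=W_2(F)=0$ for $F\in\{c_i\circ q_M,\,l_{dc_i}\}$, and this is exactly Theorem \ref{th-333}. The hypotheses $\mathcal{L}_Xc_i=X(c_i)=0$ and $\mathcal{L}_Yc_i=Y(c_i)=0$ let me apply that theorem with $f=c_i$ to each of the lifts, giving $X_C(c_i\circ q_M)=X_C(l_{dc_i})=0$ and the analogous identities for $X_V$, $Y_C$ and $Y_V$. Because in every one of the three cases the pair $(W_1,W_2)$ is drawn from $\{X_C,X_V,Y_C,Y_V\}$, the deformation term vanishes, whence $[\Pi,F]=0$ for all three tensors, which is precisely the claim. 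This is the two–vector–field analogue of the argument behind Theorem \ref{c-1}.

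I do not expect a genuine obstacle, as the computation is short and rests entirely on results already established. The only point demanding care is the bookkeeping in the bracket expansion: one must confirm that the coefficient $c({\bf x})$ factors cleanly out (it does, since $W_1\wedge W_2$ is decomposable) and that, after using $[\pi_{TM},F]=0$, no residual terms involving derivatives of $c$ survive. Checking that the specific vector fields occurring in each of the three tensors are all covered by Theorem \ref{th-333} is then immediate.
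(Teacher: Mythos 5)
Your proposal is correct and follows essentially the same route as the paper: the paper's proof is the one-line observation that the claim follows from Theorem \ref{th-333} together with the known fact (\ref{cas}) that $c_i\circ q_M$ and $l_{dc_i}$ are Casimirs of $\pi_{TM}$, which is exactly the argument you spell out. Your explicit expansion $[c({\bf x})\,W_1\wedge W_2,F]=c({\bf x})\bigl(W_2(F)\,W_1-W_1(F)\,W_2\bigr)$ is a correct and welcome filling-in of the detail the paper leaves implicit.
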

\begin{proof}
This is the consequence of Theorem \ref{th-333} and formula (\ref{cas-3}).
\end{proof}
In addition, $c_i\circ q_M$ is always a Casimir function for the Poisson tensor $\pi_{TM,X_{V},Y_{V},c}$.

Those procedures can be repeated many times, with certain assumptions,
which gives the following theorems.
\begin{theorem}
\label{theorem-last}
Let $X_i\in\mathcal{X}(M)$ for $i=1,2,3,4$ and let $X_{i,C}$,  $X_{i,V}$ be the complete  and vertical lifts of the vectors 
$X_i$ on $TM$.   
\begin{enumerate}
\item If $[X_1,X_i]=0$, $[X_2,X_3]=0$, $[X_3,X_4]=0$, then\begin{align}
\label{con-4-p} & \pi_{X_{1,C}, X_{2,V},X_{3,C}, X_{4,V}}=X_{1,C}\wedge X_{2,V}+X_{3,C}\wedge X_{4,V}
\end{align}
is the Poisson tensor on $TM$.
\item If $[X_i,X_j]=0$ for $i,j=1,2,3,4$, then\begin{align}
& \pi_{X_{1,C}, X_{2,C},X_{3,C}, X_{4,V}}=X_{1,C}\wedge X_{2,C}+X_{3,C}\wedge X_{4,V},\\
& \pi_{X_{1,C}, X_{2,C},X_{3,C}, X_{4,C}}=X_{1,C}\wedge X_{2,C}+X_{3,C}\wedge X_{4,C}
\end{align}
are the Poisson tensors on $TM$.
\item If $[X_1,X_i]=0$, $[X_2,X_i]=0$, then
\begin{align}
& \pi_{X_{1,C}, X_{2,C},X_{3,V}, X_{4,V}}=X_{1,C}\wedge X_{2,C}+X_{3,V}\wedge X_{4,V}
\end{align}
is the Poisson tensor on $TM$.
\item If $[X_1,X_i]=0$,  then\begin{align}
& \label{con-4-ost} \pi_{X_{1,C}, X_{2,V},X_{3,V}, X_{4,V}}=X_{1,C}\wedge X_{2,V}+X_{3,V}\wedge X_{4,V}
\end{align}
is the Poisson tensor on $TM$.
\end{enumerate}
\end{theorem}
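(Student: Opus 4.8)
The plan is to prove each of the four bi-vectors is Poisson by verifying the vanishing of its Schouten--Nijenhuis bracket with itself, reducing everything to the graded Leibniz rule (\ref{p6}) together with the three commutator identities $[X_C,Y_C]=[X,Y]_C$, $[X_C,Y_V]=[X,Y]_V$, and $[X_V,Y_V]=0$ (equation (\ref{3-c})). The unifying idea is that each tensor has the shape $\pi=A+B$ where $A$ and $B$ are decomposable bi-vectors built from lifts, so by bilinearity $[\pi,\pi]=[A,A]+2[A,B]+[B,B]$, and I would show each of the three pieces vanishes separately under the stated bracket hypotheses.

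\smallskip

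First I would record a reusable lemma: for decomposable bi-vectors $U\wedge V$ and $P\wedge Q$ with $U,V,P,Q\in\mathcal{X}(TM)$, repeated application of the Leibniz rule (\ref{p6}) gives
\begin{align}
[U\wedge V,P\wedge Q] & =[U,P]\wedge V\wedge Q-[U,Q]\wedge V\wedge P\nonumber\\
& \quad -[V,P]\wedge U\wedge Q+[V,Q]\wedge U\wedge P,\nonumber
\end{align}
so that every term is a commutator of two of the four fields wedged with the remaining two. In particular $[A,A]$ for $A=X_{i,C}\wedge X_{j,(\cdot)}$ collapses (as in the proof of Theorem~\ref{theorem-3}) to $2[X_i,X_j]_{(\cdot)}\wedge X_{i,C}\wedge X_{j,(\cdot)}$, which vanishes exactly when $[X_i,X_j]=0$. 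This already handles the self-bracket of each summand under the diagonal commuting hypotheses present in every case. The substantive content is therefore the cross term $2[A,B]$.

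\smallskip

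For the cross term I would expand $[A,B]$ using the lemma; it produces four terms, each a bracket $[X_a,X_b]_{(\cdot)}$ (a lift of a commutator on $M$, via the identities above) wedged with two other lifted fields. I would then check, case by case, that the commutator hypotheses listed in each part kill every one of these four terms. For instance in part (4) with $A=X_{1,C}\wedge X_{2,V}$ and $B=X_{3,V}\wedge X_{4,V}$, the bracket $[X_{2,V},X_{3,V}]=[X_{3,V},X_{4,V}]=0$ by (\ref{3-c}) annihilates the terms coming from the vertical legs, and the two remaining terms carry the factors $[X_1,X_3]$ and $[X_1,X_4]$, both zero by the hypothesis $[X_1,X_i]=0$; so only the self-brackets (already dealt with, and zero since $[X_1,X_2]=0$ is among the hypotheses) survive and these too vanish. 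The same bookkeeping, reading off which legs are complete versus vertical and which commutators are assumed to vanish, disposes of parts (1)--(3); the hypotheses in each part are precisely tuned so that every surviving commutator is among those assumed to be zero.

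\smallskip

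The main obstacle is purely combinatorial rather than conceptual: one must keep careful track of signs from the graded Leibniz rule and of which field is a complete lift (contributing an honest $[X_a,X_b]_C$) versus a vertical lift (where $[X_{a,V},X_{b,V}]=0$ automatically), and then confirm that the list of assumed commuting relations in each item covers exactly the commutators that appear. I would organize this as a single table of the four cross terms per case, mark each entry with the commutator that forces it to vanish, and thereby verify that no uncancelled term remains; the wedge $X_{i,C}\wedge X_{i,C}=0$ and $X_{i,V}\wedge X_{i,V}=0$ relations further simplify any repeated-index terms. Since every piece is shown to vanish, $[\pi,\pi]=0$ and each bi-vector is Poisson.
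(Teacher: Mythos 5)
Your proposal is correct and follows essentially the same route as the paper: the paper's proof also rests on the single expansion formula $[X\wedge Y,Z\wedge W]=[X,Z]\wedge Y\wedge W+[Z,Y]\wedge X\wedge W+[X,W]\wedge Z\wedge Y+[W,Y]\wedge Z\wedge X$ (identical to your lemma up to reordering of wedge factors and the antisymmetry $[Z,Y]=-[Y,Z]$), applied to the self-brackets and cross terms and combined with the lift identities $[X_C,Y_C]=[X,Y]_C$, $[X_C,Y_V]=[X,Y]_V$, $[X_V,Y_V]=0$. Your case-by-case bookkeeping simply spells out what the paper leaves implicit in the phrase ``applying this equality for all above bi-vector field cases.''
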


\begin{proof}
Our proof starts with observation that
\begin{equation}
[X\wedge Y, Z\wedge W]=[X,Z]\wedge Y\wedge W+ [Z,Y]\wedge X\wedge W+[X,W]\wedge Z\wedge Y+[W,Y]\wedge Z\wedge X
\end{equation}
for $X,Y,Z,W\in\mathcal{X}(TM)$.
Applying this equality for all above bi-vector field cases, we get our conclusions.
\end{proof}
The above theorem gives the restrictive conditions  for these structures to be bi--Hamiltonian.
Moreover we have the following corollary.

\begin{corollary}
\label{col-1}
If the  bi-vectors (\ref {con-4-p}--\ref{con-4-ost}) are Poisson tensors  and $X_i$ for $i=1,2,3,4$ are Poisson vector fields on $M$ then Poisson tensor $\pi_{TM}$ is compatible with them. 
\end{corollary}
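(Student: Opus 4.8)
The plan is to reduce the claim to the vanishing of a single Schouten--Nijenhuis bracket. By the notion of compatibility recalled earlier, two Poisson tensors $\pi_1,\pi_2$ are compatible precisely when $[\pi_1,\pi_2]=0$. Since we are assuming that each of the bi-vectors in (\ref{con-4-p})--(\ref{con-4-ost}) is already a Poisson tensor and that $\pi_{TM}$ is Poisson, it suffices to show that $[\pi_{TM},\Pi]=0$ for each such bi-vector, which I denote generically by $\Pi$.

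The key input is the theorem asserting that complete and vertical lifts of Poisson vector fields are themselves Poisson vector fields on $(TM,\pi_{TM})$. Concretely, since each $X_i$ is assumed to be a Poisson vector field on $(M,\pi)$, both lifts $X_{i,C}$ and $X_{i,V}$ are Poisson vector fields for $\pi_{TM}$. Recalling from the discussion of the first Poisson cohomology group that for a vector field $V$ one has $[\pi_{TM},V]=-\mathcal{L}_V\pi_{TM}$, the Poisson property of each lift translates immediately into
\begin{equation}
[\pi_{TM},X_{i,C}]=0,\qquad [\pi_{TM},X_{i,V}]=0,\qquad i=1,2,3,4.
\end{equation}

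Next I would exploit the derivation property (\ref{p6}) of the Schouten--Nijenhuis bracket. Each of the five bi-vectors has the shape $\Pi=A\wedge B+C\wedge D$, where $A,B,C,D$ are chosen among the lifts $\{X_{i,C},X_{i,V}\}$ and are therefore all Poisson vector fields for $\pi_{TM}$. Applying the Leibniz rule with $\pi_{TM}$ in the first slot gives
\begin{equation}
[\pi_{TM},A\wedge B]=[\pi_{TM},A]\wedge B-A\wedge[\pi_{TM},B]=0,
\end{equation}
and likewise $[\pi_{TM},C\wedge D]=0$; bilinearity of the bracket then yields $[\pi_{TM},\Pi]=0$ in every case at once, which is the desired compatibility.

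There is no genuine obstacle here: the whole content is that, once the $X_i$ are Poisson, their lifts annihilate $\pi_{TM}$ under the bracket, and the Leibniz rule propagates this through each wedge factor. The commutation hypotheses $[X_i,X_j]=0$ from the preceding theorem serve only to guarantee that the $\Pi$ are themselves Poisson tensors, so that the word \emph{compatible} is meaningful; they play no role in the compatibility computation itself. The only point requiring minimal care is the sign in the Leibniz rule, but since both terms vanish independently it is immaterial.
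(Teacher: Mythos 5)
Your proof is correct and follows exactly the route the paper itself uses for the analogous statements (Theorems \ref{theorem-3a}, \ref{VV} and \ref{C}): the paper leaves this corollary without an explicit proof, but its intended argument is precisely that the lifts $X_{i,C}$, $X_{i,V}$ are Poisson vector fields for $\pi_{TM}$, so $[\pi_{TM},\cdot]$ annihilates each wedge factor via the Leibniz rule (\ref{p6}) and hence each bi-vector $\Pi$, giving $[\pi_{TM},\Pi]=0$. Your observation that the commutation hypotheses $[X_i,X_j]=0$ enter only to make $\Pi$ Poisson, not to establish compatibility, is also accurate.
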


\section{Examples}

Let us take $\mathbb{R}^3$ with local coordinates ${\bf x}=(x^1,x^2,x^3)$ and let us consider the linear Poisson structure given by the Poisson tensor 
\begin{equation}
\pi({\bf x})=x^1\frac{\partial}{\partial x^2}\wedge\frac{\partial}{\partial x^3},
\end{equation} 
which equivalently can be written in the following form
\begin{equation}\label{p31}
\pi({\bf x})=\left( \begin{array}{ccc} 0&0&0\\0&0&x^1\\0&-x^1&0\\ \end{array}\right).
\end{equation}
Linear Poisson structure given above is related to the Lie algebra $\mathcal{A}_{3,1}.$ The commutation rule for this Lie algebra is $[e_2,e_3]=e_1$ and it has only one invariant which is $e_1,$  see \cite{n1}. In this case the Casimir function for $\pi$ assumes following form 
$$c_1({\bf x})=x^1.$$
It is easy to see that a Poisson vector field in this case is given by
\begin{equation}
X=x^1\left(\frac{\partial v^2}{\partial x^2}({\bf x})+\frac{\partial v^3}{\partial x^3}({\bf x})\right)\frac{\partial}{\partial x^1}+v^2({\bf x})\frac{\partial}{\partial x^2}+v^3({\bf x})\frac{\partial}{\partial x^3},
\end{equation} 
where $\frac{\partial v^2}{\partial x^2}({\bf x})+\frac{\partial v^3}{\partial x^3}({\bf x})=f(x^1)$ and $f$ is an arbitrary function of   one variable.
We can lift the Poisson tensor on $\mathbb{R}^3$ to $T\mathbb{R}^3,$  then 
\begin{equation}
\label{ex-1}
\pi_{TM}({\bf x},{\bf y})=\left(\begin{array}{ccc|ccc} 0&0&0&0&0&0\\0&0&0&0&0&x^1\\0&0&0&0&-x^1&0\\\hline 0&0&0&0&0&0\\0&0&x^1&0&0&y^1\\0&-x^1&0&0&-y^1&0\end{array}\right),
\end{equation}
where $({\bf x},{\bf y})=(x^1,x^2,x^3,y^1,y^2,y^3).$ The Casimir functions for this structure are given by $c_1({\bf x},{\bf y})=x^1,\quad c_2({\bf x},{\bf y})=y^1$ . We can also see that this is a Lie--Poisson structure associated with Lie algebra $\mathcal{A}_{6,4},$ for which commutation rules are $[e_1,e_2]=e_5, [e_1,e_3]=e_4,[e_2,e_4]=e_6$ and $(x^1,x^2,x^3,y^1,y^2,y^3)\mapsto (e_6,-e_4,e_3,e_5,e_1,e_2)$, see \cite{AJ, n1}.

Now we present the list of some infinitesimal deformations of the Poisson tensor $\pi_{TM}$ given in (\ref{ex-1}) through the choice of different Poisson vector fields.
\begin{enumerate}
\item Let us now  take as a Poisson vector  field 
\begin{equation}
X=\sqrt{x^3}\dfrac{\partial}{\partial x^2}
\end{equation}
 and put $\lambda c(x)=1$.
Then the complete and vertical lifts are given by
\begin{align}
X_{C}=\sqrt{x^3}\dfrac{\partial}{\partial x^2}+\dfrac{y^3}{2\sqrt{x^3}}\dfrac{\partial}{\partial y^2}, & & X_{V}=\sqrt{x^3}\dfrac{\partial}{\partial y^2}
\end{align}
 The Poisson tensor described by Theorem \ref{theorem-3a}, which in local coordinates can be written as in the (\ref{ten-3}), is given by
\begin{equation}
\pi_{TM,X_{C},X_{V},c}({\bf x},{\bf y})=\left(\begin{array}{ccc|ccc}0&0&0&0&0&0\\0&0&0&0&x^3&x^1\\0&0&0&0&-x^1&0\\\hline 0&0&0&0&0&0\\0&-x^3& x^1&0&0&y^1\\0&-x^1&0&0&-y^1&0\end{array}\right),
\end{equation}
for $\lambda=1$.
By direct calculation and changing the variables we can prove that this is a tensor for the Lie--Poisson structure associated with the Lie algebra $\mathcal{A}_{6,6}$ from the classification given in \cite{n1}. Commutation relations for this Lie algebra are $[e_1,e_2]=e_6, [e_1,e_3]=e_4, [e_1,e_4]=e_5,$ and  $[e_2,e_3]=e_5,$ where $(x^1,x^2,x^3,y^1,y^2,y^3)\mapsto (e_5,-e_3,e_4,e_6,e_1,e_2).$ Moreover the Casimir functions for structure $\pi_{TM,X_{C},X_{V},c}$ are $c_1({\bf x})=x^1$ and $c_2({\bf x},{\bf y})=l_{dx^1}=y^1$ from Theorem \ref{c-1}.

\item Let us now take Poisson vector fields 
\begin{equation}
X=x^1\dfrac{\partial}{\partial x^1}+x^2\dfrac{\partial}{\partial x^2}, \quad Y=\dfrac{\partial}{\partial x^3}.
\end{equation}
 Then their vertical lifts are of the form
\begin{equation}
X_{V}=x^1\dfrac{\partial}{\partial y^1}+x^2\dfrac{\partial}{\partial y^2}, \quad Y_{V}=\dfrac{\partial}{\partial y^3}
\end{equation}
and bi-vector can be expressed as
$$X_V\wedge Y_V=x^1\dfrac{\partial}{\partial y^1}\wedge\dfrac{\partial}{\partial y^3}+x^2\dfrac{\partial}{\partial y^2}\wedge\dfrac{\partial}{\partial y^3}.$$
Then by taking as above $\lambda c({\bf x})=1$ and considering Poisson tensor $\pi$ we get, from Theorem \ref{VV}, that $\pi_{TM}+X_{V}\wedge Y_{ V}$ is a Poisson tensor and it is  given by following matrix
 \begin{equation}
\pi_{TM, X_V,Y_V,c}({\bf x},{\bf y})=\left( \begin{array}{ccc|ccc} 0&0&0&0&0&0\\0&0&0&0&0&x^1\\0&0&0&0&-x^1&0\\\hline 0&0&0&0&0&x^1\\0&0&x^1&0&0&y^1+x^2\\0&-x^1&0&-x^1&-y^1-x^2&0\\ \end{array}\right).
\end{equation}
If we take mapping $(x^1,x^2,x^3,y^1,y^2,y^3)\mapsto (2x^1,y^1+x^2,2x^3,y^1-x^2,y^2,y^3)\mapsto(e_1,e_2,e_4,e_6,e_3,e_5)$ then we can recognize that above tensor is a Poisson tensor for Lie--Poisson structure related to direct sum $\mathcal{A}_{5,5}\oplus\big< e_6\big>$
for which commutation rules are given by $[e_3,e_4]=e_1, [e_2,e_5]=e_1, [e_3,e_5]=e_2.$ Furthermore the Casimir functions are $c_1({\bf x})=x^1, c_2({\bf x},{\bf y})=y^1-x^2$.

\item Let us take now four Poisson vector fields 
\begin{align}
& X_1=x^1\dfrac{\partial}{\partial x^1}+x^3\dfrac{\partial}{\partial x^3},\quad Y_1=-\dfrac{x^2}{x^1}\dfrac{\partial}{\partial x^3},\\
& X_2=x^1\dfrac{\partial}{\partial x^2} ,\quad Y_2=\dfrac{\partial}{\partial x^3}.
\end{align}
Then  we can  lift them vertically to Poisson vector fields on $T\mathbb{R}^3$ and get
\begin{align}
& X_{1,V}=x^1\dfrac{\partial}{\partial y^1}+x^3\dfrac{\partial}{\partial y^3},&& Y_{1,V}=-\dfrac{x^2}{x^1}\dfrac{\partial}{\partial y^3},\\  
& X_{2, V}=x^1\dfrac{\partial}{\partial y^2}, && Y_{2, V}=\dfrac{\partial}{\partial y^3}.
\end{align}
Then by taking as above $\lambda c({\bf x})=1$ and considering Poisson tensor $\pi$ we get, from Theorem \ref{theorem-last-V}, that $\pi_{TM}+X_{1,V}\wedge Y_{1, V}+X_{2, V}\wedge Y_{2, V}$ is a Poisson tensor and it is of the form
 \begin{equation}
\pi_{TM, X_{1,V}, Y_{1,V}, X_{2,V}, Y_{2,V},c}({\bf x},{\bf y})=\left( \begin{array}{ccc|ccc} 0&0&0&0&0&0\\0&0&0&0&0&x^1\\0&0&0&0&-x^1&0\\\hline 0&0&0&0&0&-x^2\\0&0&-x^1&0&0&y^1+x^1\\0&-x^1&0&x^2&-y^1-x^1&0\\ \end{array}\right).
\end{equation}
It is easy to see that it is a Poisson tensor for Lie--Poisson structure related to the Lie algebra $\mathcal{A}_{6,17}$ by taking the mapping $(x^1,x^2,x^3,y^1,y^2,y^3)\mapsto (x^1,-x^2,x^3,-y^1,y^2-\frac{1}{2}x^2,y^3-\frac{1}{2}x^3)\mapsto(e_6,e_4,e_5,e_3,e_2,e_1).$ Commutation relation for Lie algebra $\mathcal{A}_{6,17}$ are $[e_1,e_2]=e_3, [e_1,e_3]=e_4,$ $[e_1,e_4]=e_6$ and $[e_2,e_5]=e_6$ and Casimir functions are $c_1({\bf x})=x^1, c_2({\bf x},{\bf y})=(x^2)^2+2y^1x^1.$

\item Let us now consider the Poisson vector fields 
\begin{equation}
X=x^3\dfrac{\partial}{\partial x^2}+x^1\dfrac{\partial}{\partial x^3}, \quad Y=\dfrac{\partial}{\partial x^2}
\end{equation}
  and let us put $\lambda c({\bf x})=1$. Then from (\ref{vec}) we get complete and vertical lifts of the vector fields on $T\mathbb{R}^3$, given by 
\begin{equation}
X_{C}=x^3\dfrac{\partial}{\partial x^2}+x^1\dfrac{\partial}{\partial x^3}+y^3\dfrac{\partial}{\partial y^2}+y^1\dfrac{\partial}{\partial y^3}\qquad Y_{V}=\dfrac{\partial}{\partial y^2}.
\end{equation}
It is easy to see that $[X,Y]=0$ is a Poisson tensor. Then from Theorem \ref{C} we get that $\pi_{TM}+X_{C}\wedge Y_{V}$ is also a Poisson tensor and it is of the form
\begin{equation}
\pi_{TM,X_{C},Y_{V},c}({\bf x},{\bf y})=\left( \begin{array}{ccc|ccc} 0&0&0&0&0&0\\0&0&0&0&x^3&x^1\\0&0&0&0&0&0\\\hline 0&0&0&0&0&0\\0&-x^3&0&0&0&0\\0&-x^1&0&0&0&0\\ \end{array}\right).
\end{equation}
By direct calculation and changing the variables we can prove that this is a tensor for a Lie--Poisson structure related to direct sum $\mathcal{A}_{5,1}\oplus \langle e_6\rangle.$ 
Commutation rules for Lie algebra $\mathcal{A}_{5,1}$ are $[e_3,e_5]=e_1$ and $[e_4,e_5]=e_2$ where $(x_1,x_2,x_3,y_1,y_2,y_3)\mapsto (e_1,e_5,e_2,e_6,-e_4,-e_3).$ Moreover the Casimir functions for structure $\pi_{TM,X_{1,TM},Y_{2,TM},c}$ are $c_1({\bf x})=x_1, c_2({\bf x})=x_3$ and $c_3({\bf x},{\bf y})=x_1y_2-x_3y_3.$

\item Let us take now four Poisson vector fields 
\begin{align}
& X_1=\dfrac{\partial}{\partial x^3},\quad Y_1=x^1\dfrac{\partial}{\partial x^2},\\
& X_3=\dfrac{\partial}{\partial x^2}, \quad Y_2=x^3\dfrac{\partial}{\partial x^2}.
\end{align}
Then from (\ref{vec}) we can lift them to Poisson vector fields on $T\mathbb{R}^3$ and get
\begin{align}
& X_{1,C}=\dfrac{\partial}{\partial x^3},&& Y_{1,V}=x^1\dfrac{\partial}{\partial y^2},\\
& X_{2,C}=\dfrac{\partial}{\partial x^2}, && Y_{2,V}=x^3\dfrac{\partial}{\partial y^2}.
\end{align}
Then by taking as above $\lambda c({\bf x})=1$ and considering Poisson tensor $\pi$ we get, from Corollary \ref{col-1}, that $\pi_{TM}+X_{1,C}\wedge Y_{1,V}+X_{2,C}\wedge Y_{2,V}$ is a Poisson tensor and it is of the form
 \begin{equation}
\pi_{TM,X_{1,C},Y_{1,V},X_{2,C},Y_{2,V},c}({\bf x},{\bf y})=\left( \begin{array}{ccc|ccc} 0&0&0&0&0&0\\0&0&0&0&x^3&x^1\\0&0&0&0&0&0\\\hline 0&0&0&0&0&0\\0&-x^3&0&0&0&y^1\\0&-x^1&0&0&-y^1&0\\ \end{array}\right).
\end{equation}
By changing the variables and direct calculation we can recognized that this is a Poisson tensor for a Lie--Poisson structure associated with the Lie algebra $\mathcal{A}_{6,3}.$ From \cite{n1}, commutation rules for this Lie algebra are $[e_1,e_2]=e_6, [e_1,e_3]=e_4$ and $[e_2,e_3]=e_5,$ where $(x^1,x^2,x^3,y^1,y^2,y^3)\mapsto (e_4,e_1,e_6,e_5,e_2,e_3)$ and Casimir functions are $c_1({\bf x})=x^1$, $c_2({\bf x})=x^3$, $c_3({\bf y})=y^1$, $c_4({\bf x},{\bf y})=x^2y^1+x^3y^3-x^1y^2$.

\end{enumerate}

\section*{Acknowledgments}

This article has received financial support from the Polish Ministry of Science and Higher Education under subsidy for maintaining the research potential of the Faculty of Mathematics and Informatics, University of Bialystok (BST-148).
 

\bibliographystyle{plain}

\end{document}